\newcommand{\xrightarrowdbl}[2][]{%
  \xrightarrow[#1]{#2}\mathrel{\mkern-14mu}\rightarrow
}
\newcommand{\fp}{\mathfrak{p}}
\newcommand{\Hom}{\text{Hom}}
\newcommand{\End}{\text{End}}
\newcommand{\op}{\text{op}}
\newcommand{\ad}{\text{ad}}
\newcommand{\Sp}{\text{Spec} \,}
\newcommand{\Lie}{\text{Lie}}
\chardef\bslash=`\\ 
\newtheorem{theorem}{Theorem}[section]
\newtheorem{prop}[theorem]{Proposition}
\newtheorem{lem}[theorem]{Lemma}
\newtheorem{cor}[theorem]{Corollary}
\theoremstyle{definition}
\newtheorem{remark}[theorem]{Remark}
\newtheorem{example}[theorem]{Example}
\numberwithin{equation}{section}
\newtheorem*{maintheorem*}{Main Theorem}
\theoremstyle{definition}
\newtheorem{definition}{Definition}
\newcommand{\surj}{\twoheadrightarrow}
\newcommand{\CO}{\mathcal{O}}
\newcommand{\CS}{\mathcal{O}_S}
\newcommand{\Oiy}{\mathcal{O}_{\{\iy\}}}
\newcommand{\ClS}{\text{Cl}_S}
\newcommand{\fX}{\mathfrak{X}}
\newcommand{\mk}{\medskip}
\renewcommand{\sectionmark}[1]{}
\renewcommand{\Im}{\operatorname{Im}}
\newcommand{\diag}{\text{diag}}
\newcommand{\af}{\text{af}}
\newcommand{\Br}{{\mathrm{Br}}}
\newcommand{\Aut}{\textbf{Aut}}
\newcommand{\ve}{\varepsilon}
\newcommand{\iy}{\infty}
\newcommand{\bk}{\bigskip}
\newcommand{\fc}{\frac}
\newcommand{\s}{\sigma}
\newcommand{\syc}{\text{sc}}
\newcommand{\Pic}{\text{Pic~}}
\newcommand{\dl}{\delta}
\newcommand{\lm}{\lambda}
\newcommand{\Lm}{\Lambda}
\newcommand{\Om}{\Omega}
\newcommand{\ov}{\overline}
\newcommand{\un}{\underline}
\newcommand{\BG}{\mathbb{G}}
\newcommand{\BF}{\mathbb{F}}
\newcommand{\C}{\textbf{C}}
\newcommand{\BQ}{\mathbb{Q}}
\newcommand{\Z}{\mathbb{Z}}
\renewcommand{\a}{\alpha}
\renewcommand{\b}{\beta}
\newcommand{\et}{\text{\'et}}
\newcommand{\zar}{\text{Zar}}
\newcommand{\BZ}{\mathbb{Z}}
\newcommand{\A}{\mathbb{A}}
\newcommand{\N}{\mathbb{N}}
\newcommand{\Nrd}{{\mathrm{Nrd}}}
\newcommand{\TwG}{\textbf{Twist}(\un{G})}
\begin{document}

\title[Twisted forms of a semisimple group]{The twisted forms of a semisimple group over an $\BF_q$-curve}

\author{Rony A. Bitan, Ralf K\"ohl, Claudia Schoemann}
\address{Rony A. Bitan, \\ Afeka, Tel-Aviv Academic College of Engineering \\ Tel-Aviv, Israel, \\ Bar-Ilan University \\ Ramat-Gan, Israel}
\email{ronyb@afeka.ac.il}

\address{Ralf K\"ohl, \\ JLU Giessen, Mathematisches Institut, Arndtstr.~2, 35392 Giessen, Germany}
\email{ralf.koehl@math.uni-giessen.de}

\address{Claudia Schoemann, \\ Leibniz University Hannover, Institute for Algebraic Geometry, Welfengarten 1, 30167 Hannover, Germany}
\email{schoemann@math.uni-hannover.de}

\subjclass[2010]{11G20,11G45,11R29}

\keywords{Class number, Hasse principle, Tamagawa number, \'etale cohomology}

\maketitle


\begin{abstract}
Let $C$ be a smooth, projective and geometrically connected curve defined over a finite field $\BF_q$.
Given a semisimple $C-S$-group scheme $\un{G}$ where $S$ is a finite set of closed points of $C$,
we describe the set of ($\CS$-classes of) twisted forms of $\un{G}$
in terms of geometric invariants of its fundamental group $F(\un{G})$.
\end{abstract}

\bk

\section{Introduction} \label{Introduction}
Let $C$ be a projective, smooth and geometrically connected curve defined over a finite field $\BF_q$.
Let $\Om$ be the set of all closed points on $C$.
For any $\fp \in \Om$ let $v_\fp$ be the induced discrete valuation
on the (global) function field $K=\BF_q(C)$,
$\hat{\CO}_\fp$ the ring of integers in the completion $\hat{K}_\fp$ of $K$ with respect to $v_\fp$,
and $k_\fp$ the residue field.
Any finite subset $S \subset \Om$ gives rise to a \emph{Dedekind scheme},
namely, a Noetherian integral scheme of dimension $1$ whose local rings are regular;
If $S$ is \emph{nonempty} it will be the spectrum of the Dedekind domain
$$ \CS :=
\{x \in K: v_\fp(x) \geq 0 \ \forall \fp \notin S \}.  $$
Otherwise, if $S = \emptyset$, the corresponding Dedekind scheme is the curve $C$ itself, 
and we denote by $\CS$ the structural sheaf of $C$. 

\mk

Throughout this paper $\un{G}$ is an $\CS$-group scheme whose generic fiber  $G:=\un{G} \otimes_{\CS} K$ is almost-simple,
and whose fiber $\un{G}_\fp=\un{G} \otimes_{\CS} \hat{\CO}_\fp$ at any $\fp \in \Om-S$ is \emph{semisimple},
namely, (connected) reductive over $k_\fp$,
and the rank of its root system equals that of its lattice of weights (\cite[Exp.~XIX Def.~2.7, Exp. XXI Def.~1.1.1]{SGA3}).
Let $\un{G}^\syc$ be the universal (central) cover (being simply-connected) of $\un{G}$,
and suppose that its \emph{fundamental group} $F(\un{G}):=\ker[\un{G}^\syc \surj \un{G}]$ (cf. \cite[p.40]{Con2}) is of order prime to $\text{char}(K)$.

\mk

A \emph{twisted form} of $\un{G}$ is an $\CS$-group that is isomorphic to $\un{G}$
over some finite \'etale cover of~$\CS$.
We aim to describe explicitly --
in terms of some invariants of $F(\un{G})$ and the group of outer automorphisms of~$\un{G}$ --
the finite set of all twisted forms of $\un{G}$, modulo $\CS$-isomorphisms.
This is done first in Section~\ref{torsors classification} for forms arising from the torsors of the adjoint group $\un{G}^\ad$,
and then in Section \ref{twisted forms},
through the action of the outer automorphisms of $\un{G}$ on its Dynkin diagram,
for all twisted forms.
More concrete computations
are provided in Sections \ref{Split fundamental group}, \ref{Quasi-split fundamental group} and \ref{Non quasi-split fundamental group}.
The case of type $\text{A}$ deserves a special consideration,
this is done in Section \ref{anisotropic case}.
The Zariski topology is treated in Section \ref{Zariski}.

\mk

Before we start may we quote B. Conrad in the abstract of \cite{Con3}:
``The study of such $\Z$-groups provides concrete applications of many facets
of the theory of reductive groups over rings (scheme of Borel subgroups, auto-morphism scheme, relative non-abelian cohomology, etc.),
and it highlights the role of number theory (class field theory, mass formulas, strong approximation,
point-counting over finite fields, etc.) in analyzing the possibilities".

\mk

\section{Torsors} \label{torsors classification}
A \emph{$\un{G}$-torsor $P$ in the \'etale topology} is a sheaf of sets on $\CS$ 
equipped with a (right) $\un{G}$-action, which is locally trivial in the \'etale topology, 
namely, locally for the \'etale topology on $\CS$, this action is isomorphic to the action of $G$ on itself by translation.
The associated $\CS$-group scheme ${^P}\un{G}$, being an inner form of $\un{G}$,
is called the \emph{twist} of $\un{G}$ by $P$
(e.g., \cite[\S 2.2, Lemma 2.2.3, Examples 1,2]{Sko}).
We define $H^1_\et(\CS,\un{G})$ to be the set of isomorphism classes
of $\un{G}$-torsors relative to the \'etale topology (or the flat one;
these two cohomology sets coincide when $\un{G}$ is smooth; cf. \cite[VIII~Cor.~2.3]{SGA4}).
This set is finite (\cite[Prop.~3.9]{BP}).
The sets $H^1(K,G)$ (denoting the Galois cohomology) and $H^1_\et(\hat{\CO}_\fp,\un{G}_\fp)$ are defined similarly.

\mk

There exists a canonical map of pointed-sets:
\begin{equation*}
\lm: H^1_\et(\CS,\un{G}) \to H^1(K,G) \times \prod\limits_{\fp \notin S} H^1_\et(\hat{\CO}_\fp,\un{G}_\fp).
\end{equation*}
defined by $[X] \mapsto [(X \otimes_{\CS} \Sp K) \times \prod_{\fp \notin S} X \otimes_{\CS} \Sp \hat{\CO}_\fp]$.
Let $[\xi_0] := \lm([\un{G}])$.
The \emph{principal genus} of $\un{G}$ is then $\ker(\lm) = \lm^{-1}([\xi_0])$,
namely, the classes of $\un{G}$-torsors that are generically and locally trivial at all points of $\CS$.
More generally, a \emph{genus} of $\un{G}$ is any fiber $\lm^{-1}([\xi])$ where $[\xi] \in \Im(\lm)$.
The \emph{set of genera} of $\un{G}$ is then:
$$ \text{gen}(\un{G}) := \{ \lm^{-1}([\xi]) \ : \ [\xi] \in \Im(\lm) \}, $$
hence $H^1_\et(\CS,\un{G})$ is a disjoint union of all genera.

\mk

The ring of $S$-integral ad\`eles $\A_S := \prod_{\fp \in S} \hat{K}_\fp \times \prod_{\fp \notin S} \hat{\CO}_\fp$ is a subring of the ad\`eles $\A$.
A $\un{G}$-torsor $P=\text{Iso}(\un{G},\un{G}')$ belongs to the principal genus of $\un{G}$ 
if it is both $\A_S$- and $K$-trivial, hence the principal genus
bijects as a pointed-set to the $S$-\emph{class set} of $\un{G}$ (see \cite[Thm.~I.3.5]{Nis}):
$$ \ClS(\un{G}) := \un{G}(\A_S) \backslash \un{G}(\A) / G(K).   $$
Being finite (\cite[Proposition~3.9]{BP}), its cardinality, called the $S$-\emph{class number} of $\un{G}$, is denoted $h_S(\un{G})$.
As $\un{G}$ is assumed to have connected fibers, 
by Lang's Theorem (recall that all residue fields are finite) all $H^1_\et(\hat{\CO}_\fp,\un{G}_\fp)$ vanish,
which indicates that any two $\un{G}$-torsors share the same genus if and only if they are $K$-isomorphic.

\mk

The universal cover of $\un{G}$ forms a short exact sequence of \'etale $\CS$-groups (cf. \cite[p.40]{Con2}):
\begin{equation} \label{universal covering}
1 \to F(\un{G}) \to \un{G}^\syc \to \un{G} \to 1.
\end{equation}
This gives rise by \'etale cohomology to the co-boundary map of pointed sets:
\begin{equation} \label{delta}
\dl_{\un{G}} : H^1_\et(\CS,\un{G}) \surj H^2_\et(\CS,F(\un{G}))
\end{equation}
which is surjective by (\cite{Dou}, Cor.~1) as $\CS$ is of \emph{Douai-type}
(see Def.~5.2 and Exam.~5.4(iii),(v) in \cite{Gon}).
It follows from the fact that $ H^2_\et(\CS,\un{G}^\syc) $
(resp., $ H^2_\et(\CS,\un{G}^\syc) $) has only trivial classes
and in finite number (\cite{Dou}, Thm. 1.1).

\mk

A representation $\rho:\un{G}^\syc \to \un{\textbf{GL}}_1(A)$ where $A$ is an Azumaya $\CS$ algebra,
is said to be \emph{center-preserving} if $\rho(Z(\un{G})^\syc) \subseteq Z(\un{\textbf{GL}}_1(A))$.
The restriction of $\rho$ to $F(\un{G}) \subseteq Z(\un{G}^\syc)$,
composed with the natural isomorphism $Z(\un{\textbf{GL}}_1(A)) \cong \un{\BG}_m$,
is a map $\Lm_\rho: F(\un{G}) \to \un{\BG}_m$,
thus inducing a map: $(\Lm_\rho)_*:H^2_\et(\CS,F(\un{G})) \to H^2_\et(\CS,\un{\BG}_m) \cong \Br(\CS)$.
Together with the preceding map $\dl_{\un{G}}$ we get the map of pointed-sets:
\begin{equation} \label{relative Tits}
(\Lm_{\rho})_* \circ \dl_{\un{G}}: H^1_\et(\CS,\un{G}) \to \Br(\CS),
\end{equation}
which associates any class of $\un{G}$-torsors with a class of Azuamaya $\CS$-algebras in $\Br(\CS)$.

\bk

When $F(\un{G})=\un{\mu}_m$, the following composition is surjective:
\begin{equation} \label{w_G}
w_{\un{G}}: H^1_\et(\CS,\un{G}) \xrightarrowdbl{\dl_{\un{G}}} H^2_\et(\CS,F(\un{G})) \xrightarrowdbl{i_*^{(2)}} {_m}\Br(\CS),
\end{equation}
and coincides with $(\Lm_\rho)_* \circ \dl_{\un{G}}$.

\mk

The original \emph{Tits algebras} introduced in \cite{Tits71},
are central simple algebras defined over a field,
associated to algebraic groups defined over that field.
This construction was generalized to group-schemes over rings as shown in \cite[Thm.1]{PS}.
We briefly recall it here over $\CS$:
Being semisimple, $\un{G}$ admits an inner form $\un{G}_0$ which is \emph{quasi-split}
(in the sense of \cite[XXIV, 3.9]{SGA3},
namely, not only requiring a Borel subgroup to be defined over $C-S$
but some additional data involving the scheme of Dynkin diagrams, see \cite[Def.~5.2.10.]{Con2}). 

\begin{definition} \label{Tits algebra}
Any center-preserving representation $\rho_0:\un{G}_0 \to \un{\textbf{GL}}(V)$
gives rise to a ``twisted" center-preserving representation:
$\rho:\un{G} \to \un{\textbf{GL}}_1(A_\rho)$, where $A_\rho$ is an Azumaya $\CS$-algebra,
called the \emph{Tits algebra corresponding to the representation $\rho$},
and its class in $\Br(\CS)$, is its \emph{Tits class}.
\end{definition}

\begin{lem} \label{Tits class}
If $\un{G}$ is adjoint, then for any center-preserving representation $\rho$ of $\un{G}^\syc_0$,
and a twisted $\un{G}$-form ${^P}\un{G}$ by a $\un{G}$-torsor $P$, one has:
$((\Lm_{\rho})_* \circ \dl_{\un{G}})([{^P}\un{G}])] = [{^P}A_\rho] - [A_\rho] \in \Br(\CS)$
where $[{^P}A_\rho]$ and $[A_\rho]$ are the Tits classes of $({^P}\un{G})^\syc$ and $\un{G}^\syc$ corresponding to $\rho$, respectively.
\end{lem}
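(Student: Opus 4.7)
The plan is to realise the composite $(\Lm_\rho)_* \circ \dl_{\un{G}}$ as the connecting homomorphism of a Brauer-type short exact sequence, and then read off the difference of Tits classes from the $\un{\textbf{PGL}}_1(A_\rho)$-torsor obtained by pushing the $\un{G}$-torsor $P$ forward. Since $\un{G}$ is adjoint, $F(\un{G})$ coincides with $Z(\un{G}^\sc)$ and $\un{G}=\un{G}^\sc/F(\un{G})$; the center-preserving hypothesis on $\rho$ gives $\rho(F(\un{G})) \subseteq Z(\un{\textbf{GL}}_1(A_\rho)) \cong \un{\BG}_m$, so $\rho$ descends to a morphism $\bar{\rho}: \un{G} \to \un{\textbf{PGL}}_1(A_\rho)$ and I would assemble the commutative diagram of $\CS$-group schemes
\[
\begin{array}{ccccccccc}
1 & \to & F(\un{G}) & \to & \un{G}^\sc & \to & \un{G} & \to & 1 \\
& & \downarrow \Lm_\rho & & \downarrow \rho & & \downarrow \bar{\rho} & & \\
1 & \to & \un{\BG}_m & \to & \un{\textbf{GL}}_1(A_\rho) & \to & \un{\textbf{PGL}}_1(A_\rho) & \to & 1.
\end{array}
\]

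Passing to flat cohomology and using naturality of the connecting map yields the commutative square
\[
\begin{array}{ccc}
H^1_\fl(\CS, \un{G}) & \xrightarrow{\dl_{\un{G}}} & H^2_\fl(\CS, F(\un{G})) \\
\downarrow \bar{\rho}_* & & \downarrow (\Lm_\rho)_* \\
H^1_\fl(\CS, \un{\textbf{PGL}}_1(A_\rho)) & \xrightarrow{\partial} & \Br(\CS),
\end{array}
\]
so the quantity we wish to compute on $[P]$ equals $\partial(\bar{\rho}_*([P]))$. It then remains to identify both entries on the bottom row. A class in $H^1_\fl(\CS, \un{\textbf{PGL}}_1(A_\rho))$ parametrises Azumaya $\CS$-algebras $B$ that are fppf-twists of $A_\rho$, and the standard Giraud/Brauer computation identifies $\partial$ with the map $[B] \mapsto [B] - [A_\rho]$ in $\Br(\CS)$. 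Because the construction of Tits algebras in \cite[Thm.~1]{PS} is compatible with inner twisting, the push-forward $\bar{\rho}_*([P])$ corresponds precisely to the Azumaya algebra ${^P}A_\rho$, namely the Tits algebra of $({^P}\un{G})^\sc$ attached to $\rho$ in the sense of Definition~\ref{Tits algebra}. Substituting produces $\partial(\bar{\rho}_*([P])) = [{^P}A_\rho] - [A_\rho]$, which is the claim.

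The substantive technical point lies in this last step: one must verify both that $\bar{\rho}_*([P])$ really produces ${^P}A_\rho$ rather than some other twist of $A_\rho$, and that the Brauer boundary $\partial$ genuinely implements the ``difference-of-classes'' map in this non-split (relative) setting. Both assertions are classical over a field, but over the Hasse domain $\CS$ they require smoothness of $\un{\textbf{PGL}}_1(A_\rho)$, the identification $H^2_\fl(\CS, \un{\BG}_m) \cong \Br(\CS)$ already used in the definition of $w_{\un{G}}$ in (\ref{w_G}), and the ring-theoretic Tits construction of \cite[Thm.~1]{PS}.
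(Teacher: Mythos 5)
Your proof is correct and takes a route that is structurally parallel to, but noticeably different in emphasis from, the paper's. The paper goes through the quasi-split inner form $\un{G}_0$: it writes down the two universal-covering sequences for $\un{G}_0$ and $\un{G}$, uses the twisting bijection to obtain $\dl_{\un{G}} = r_{\un{G}} \circ \dl_0$ with $r_{\un{G}}(x) = x - \dl_0([\un{G}])$, then invokes \cite[Thm.~1]{PS} to read $(\Lm_\rho)_*\bigl(\dl_0([{}^P\un{G}])\bigr) = [{}^P A_\rho]$ and $(\Lm_\rho)_*\bigl(\dl_0([\un{G}])\bigr) = [A_\rho]$, so the difference formula falls out of the definition of $r_{\un{G}}$. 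You instead stay with $\un{G}$ and its twisted representation $\rho$, push the universal-covering sequence forward along $(\Lm_\rho, \rho, \bar\rho)$ to the sequence $1 \to \un{\BG}_m \to \un{\textbf{GL}}_1(A_\rho) \to \un{\textbf{PGL}}_1(A_\rho) \to 1$, and let the relative Brauer boundary $\partial([Q]) = [{}^Q A_\rho] - [A_\rho]$ supply the subtraction structure. The two arguments are genuinely dual: the paper puts the subtraction in the normalization $r_{\un{G}}$ of the connecting map, while you put it in the standard Giraud/Brauer computation of $\partial$. In both cases the same ring-theoretic input from \cite[Thm.~1]{PS} is doing the real work, which you correctly isolate as the identification of $\bar\rho_*([P])$ with the Azumaya algebra ${}^P A_\rho$ (compatibility of the Tits-algebra construction with inner twisting). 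Your route has the mild advantage of not needing the comparison diagram with $\un{G}_0$ and of making the relative-Brauer structure explicit, at the cost of pushing the $[PS]$ compatibility statement to the very last step, where it still has to be justified with essentially the same care as in the paper.
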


\begin{proof}
By descent $F(\un{G}_0) \cong F(\un{G})$,
so we may write the short exact sequences of $\CS$-groups:
\begin{align}
1 \to F(\un{G}) \to &\un{G}^\syc     \to \un{G} \to 1  \\ \notag
1 \to F(\un{G}) \to &\un{G}^\syc_0 \to \un{G}_0 \to 1
\end{align}
which yield the following commutative diagram of pointed sets (cf. \cite[IV, Prop.~4.3.4]{Gir}):
\begin{equation} \label{relative w}
\xymatrix{
H^1_\et(\CS,\un{G}_0) \ar[r]^{=} \ar[d]^{\dl_0}        &  H^1_\et(\CS,\un{G}) \ar[d]^{\dl_{\un{G}}}  \\
H^2_\et(\CS,F(\un{G}))                \ar[r]^{r_{\un{G}}}_{=} &  H^2_\et(\CS,F(\un{G}))
}
\end{equation}
in which $r_{\un{G}}(x) := x - \dl_0([\un{G}])$,
so that $\dl_{\un{G}} = r_{\un{G}} \circ \dl_0$ maps $[\un{G}]$ to $[0]$.
The image of any twisted form ${^P}\un{G}$ where $[P] \in H^1_\et(\CS,\un{G})$ (see in Section \ref{Introduction}),
under the coboundary map
$$\dl:H^1_\et(\CS,\un{G}_0) \to H^2_\et(\CS,Z(\un{G}_0^\syc))$$
induced by the universal covering of $\un{G}_0$ corresponding to $\rho$, is $[{^P}A_\rho]$,
where ${^P}A_\rho$ is the Tits-algebra of $({^P}\un{G})^\syc$ (see \cite[Theorem~1]{PS}).
But $\un{G}_0$ is adjoint, so $Z(\un{G}_0^\syc) = F(\un{G}_0) \cong F(\un{G})$,
thus the images of $\dl$ and $\dl_0$ coincide in $\Br(\CS)$, whence:
\begin{equation*}
((\Lm_{\rho})_*(\dl_{\un{G}}([\un{G}'])) = ((\Lm_{\rho})_*(\dl_0([{^P}\un{G}]) - \dl_0([\un{G}])) = [{^P}A_\rho]-[A_\rho]. \qedhere
\end{equation*} 
\end{proof}

The fundamental group $F(\un{G})$ is a finite, of multiplicative type (cf. \cite[XXII, Cor.~4.1.7]{SGA3}),
commutative and smooth $\CS$-group (as its order is assumed prime to $\text{char}(K)$).

\begin{lem} \label{H1G iso to H^2F}
If $\un{G}$ is not of type $\text{A}$, or $S=\emptyset$,
then $H^1_\et(\CS,\un{G})$ is isomorphic to $H^2_\et(\CS,F(\un{G}))$.
\end{lem}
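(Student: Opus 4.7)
The plan is to identify $H^1_\fl(\CS,\un{G})$ with $H^2_\fl(\CS,F(\un{G}))$ via the coboundary map $\dl_{\un{G}}$ associated to the universal covering sequence \eqref{universal covering}. Since \eqref{delta} already records the surjectivity of $\dl_{\un{G}}$, the task reduces to proving that $\dl_{\un{G}}$ is also injective on the underlying pointed set, and then transferring the abelian group structure of $H^2_\fl(\CS,F(\un{G}))$ along the resulting bijection.

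For injectivity, Giraud's exact sequence of pointed sets associated to the central extension \eqref{universal covering} identifies the fibers of $\dl_{\un{G}}$ with the orbits of $H^1_\fl(\CS,\un{G}^\sc)$ acting on $H^1_\fl(\CS,\un{G})$. It therefore suffices to show $H^1_\fl(\CS,\un{G}^\sc) = 1$, which I would deduce from Lemma~\ref{H1=1 sc} applied to $\un{G}^\sc$. Smoothness, affineness and connectedness of fibers are inherited from $\un{G}$, and the generic fiber $G^\sc$ is almost simple and simply connected by construction. The only non-trivial hypothesis to check is that $G^\sc_S = \prod_{s \in S} G^\sc(\hat{K}_s)$ is non-compact.

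This is exactly where the assumption on the absolute type intervenes. By the classification of anisotropic simply connected groups over non-archimedean local fields due to Kneser and Bruhat--Tits, an absolutely almost simple simply connected group over such a field is anisotropic if and only if it is of absolute type $\mathrm{A}_n$ (in which case it is a form of the reduced norm one group of a central simple algebra, possibly equipped with an involution of the second kind). Since $\un{G}^\sc$ shares its absolute type with $\un{G}$ and is by hypothesis not of type $\mathrm{A}$, the base change $G^\sc \otimes_K \hat{K}_s$ is isotropic at every $s \in S$. Because $S$ is non-empty and isotropy of a semisimple group over a non-archimedean local field implies non-compactness of its rational points, the product $G^\sc_S$ is non-compact. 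Lemma~\ref{H1=1 sc} then yields $H^1_\fl(\CS,\un{G}^\sc)=1$, and injectivity of $\dl_{\un{G}}$ follows.

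The remaining step is to promote the bijection $\dl_{\un{G}}$ to an isomorphism of abelian groups by pulling back the group law of $H^2_\fl(\CS,F(\un{G}))$. I expect this to be the main conceptual obstacle: one should verify that the transported operation is natural with respect to the torsor-twisting bijection $H^1_\fl(\CS,\un{G}) \cong H^1_\fl(\CS,{^P}\un{G})$ recalled in Section~\ref{Introduction}. The key input is that $F({^P}\un{G}) \cong F(\un{G})$ canonically, since the fundamental group is an inner invariant of a semisimple group; this identifies the coboundary maps of $\un{G}$ and of each of its twists, so that the induced translation action of $H^2_\fl(\CS,F(\un{G}))$ on $H^1_\fl(\CS,\un{G})$ is coherent and equips the target with a well-defined abelian group structure matching that of $H^2_\fl(\CS,F(\un{G}))$.
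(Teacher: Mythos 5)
Your overall strategy---show $\dl_{\un{G}}$ is a bijection by proving $H^1_\fl(\CS,\un{G}^\sc)=1$ via Lemma~\ref{H1=1 sc}, using the classification of anisotropic simply connected groups over local fields to get non-compactness at $S$---matches the paper's approach, and the isotropy input (Bruhat--Tits/Kneser) is exactly what the paper cites. However, there is a genuine gap in how you pass from the vanishing of $H^1_\fl(\CS,\un{G}^\sc)$ to injectivity of $\dl_{\un{G}}$.

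You assert that Giraud's sequence ``identifies the fibers of $\dl_{\un{G}}$ with the orbits of $H^1_\fl(\CS,\un{G}^\sc)$ acting on $H^1_\fl(\CS,\un{G})$,'' and conclude it therefore suffices that $H^1_\fl(\CS,\un{G}^\sc)=1$. There is no such action in general: $\un{G}^\sc$ is non-abelian, so $H^1_\fl(\CS,\un{G}^\sc)$ is only a pointed set, and the exactness of $H^1_\fl(\CS,\un{G}^\sc)\to H^1_\fl(\CS,\un{G})\xrightarrow{\dl_{\un{G}}}H^2_\fl(\CS,F(\un{G}))$ as pointed sets only says that the \emph{kernel} of $\dl_{\un{G}}$---the fiber over the distinguished class---equals the image of $H^1_\fl(\CS,\un{G}^\sc)$. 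Triviality of the kernel of a map of pointed sets does not give injectivity. To control the fiber over an arbitrary class $[P]\in H^1_\fl(\CS,\un{G})$ one has to twist: the torsion bijection $H^1_\fl(\CS,\un{G})\cong H^1_\fl(\CS,{}^{P}\un{G})$ sends the fiber of $\dl_{\un{G}}$ through $[P]$ to the kernel of $\dl_{{}^{P}\un{G}}$, which equals the image of $H^1_\fl(\CS,({}^{P}\un{G})^\sc)$. So the vanishing you need is $H^1_\fl(\CS,({}^{P}\un{G})^\sc)=1$ for \emph{every} inner twist ${}^{P}\un{G}$, not just for $\un{G}^\sc$. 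This is what the paper spells out. The gap is fixable with the ingredients you already have: inner twisting preserves absolute type, so ${}^{P}\un{G}$ is still not of type $\mathrm{A}$, and your non-compactness argument applies verbatim to $({}^{P}\un{G})^\sc$; but as written the logical reduction is wrong.

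Your final paragraph about transporting the abelian group structure is unnecessary worry. The lemma only asserts that $H^1_\fl(\CS,\un{G})$ is in bijection with $H^2_\fl(\CS,F(\un{G}))$, and the latter is an abelian group simply because $F(\un{G})$ is a commutative $\CS$-group scheme of multiplicative type. No compatibility with twisting is needed to state or prove this; the paper ends the proof with exactly this one-line observation.
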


\begin{proof}
Applying \'etale cohomology to sequence \eqref{universal covering} yields the exact sequence:
$$ H^1_\et(\CS,\un{G}^\syc) \to H^1_\et(\CS,\un{G}) \xrightarrow{\delta_{\un{G}}} H^2_\et(\CS,F(\un{G}))	 $$
in which $\delta_{\un{G}}$ is surjective (see \eqref{delta}).
If $\un{G}$ is not of absolute type $\text{A}$,
it is locally isotropic everywhere (\cite[4.3~and~4.4]{BT}), in particular at $S$.
This is of course redundant when $S=\emptyset$.
Thus $H^1_\et(\CS,\un{G}^\syc)$ vanishes (\cite[Lemma 2.3]{Bit2}).
Changing the base-point in $H^1_\et(\CS,\un{G})$ to any $\un{G}$-torsor $P$,
it is bijective to $H^1_\et(\CS,{^P}\un{G})$ where ${^P}\un{G}$ is an inner form of $\un{G}$
(see Section \ref{Introduction}), thus an $\CS$-group of the same type.
Similarly all fibers of $\dl_{\un{G}}$ vanish. 
This amounts to $\delta_{\un{G}}$ being injective thus an~isomorphism. 
\end{proof}

The following two invariants of $F(\un{G})$ were defined in \cite[Def.1]{Bit2}:

\begin{definition} \label{i}
Let $R$ be a finite \'etale extension of $\CS$.
We define:
\begin{align*}
i(F(\un{G})) := \left \{ \begin{array}{l l}
{_m}\Br(R)                                           &  F(\un{G}) = \text{Res}_{R/\CS}(\un{\mu}_m) \\
\ker({_m}\Br(R) \xrightarrow{N^{(2)}} {_m}\Br(\CS))  &  F(\un{G}) = \text{Res}^{(1)}_{R/\CS}(\un{\mu}_m)
\end{array}\right.
\end{align*}
where for a group $*$, ${_m}*$ stands for its $m$-torsion~part,
and $N^{(2)}$ is induced by the norm map $N_{R/\CS}$. \\
For $F(\un{G}) = \prod_{k=1}^r F(\un{G})_k$ where each $F(\un{G})_k$ is one of the above,
$i(F(\un{G})):=\prod_{k=1}^r i(F(\un{G})_k)$. \\
We also define for such $R$:
\begin{align*}
j(F(\un{G})) := \left \{ \begin{array}{l l}
\Pic(R)/m                                                          &  F(\un{G}) = \text{Res}_{R/\CS}(\un{\mu}_m) \\
\ker ( \Pic(R)/m \xrightarrow{N^{(1)}/m} \Pic(\CS)/m )  &  F(\un{G}) = \text{Res}^{(1)}_{R/\CS}(\un{\mu}_m)
\end{array}\right.
\end{align*}
where $N^{(1)}$ is induced by $N_{R/\CS}$, and again $j(\prod_{k=1}^r F(\un{G})_k) := \prod_{k=1}^r j(F(\un{G})_k)$.
\end{definition}

\begin{definition} \label{admissible}
We call $F(\un{G})$ \emph{admissible}
if it is a finite direct product of factors of the form: 
\begin{itemize}
\item[(1)] $\text{Res}_{R/\CS}(\un{\mu}_m)$,
\item[(2)] $\text{Res}^{(1)}_{R/\CS}(\un{\mu}_m), [R:\CS]$ is prime to $m$,
\end{itemize}
where $R$ is any finite \'etale extension of $\CS$.
\end{definition}

\begin{lem} \label{decomposition of H2 F admissible}
If $F(\un{G})$ is admissible then there exists a short exact sequence of abelian groups:
\begin{equation} \label{exact sequence}
1 \to j(F(\un{G})) \to H^2_\et(\CS,F(\un{G})) \xrightarrow{\ov{i}_*} i(F(\un{G})) \to 1.
\end{equation}
This sequence splits thus reads:
$H^2_\et(\CS,F(\un{G})) \cong j(F(\un{G})) \times i(F(\un{G}))$.
\end{lem}

\begin{proof}
This sequence was shown in \cite[Cor.~2.9]{Bit2} for the case $S$ is nonempty.
The proof based on applying \'etale cohomology to the related Kummer exact sequence is similar for $S=\emptyset$.
The splitting when $F(\un{G})$ is quasi-split was proved in \cite[Thm.~1.1]{GG}.
When $F(\un{G}) = \mathrm{Res}^{(1)}_{R/\CS}(\un{\mu}_m)$, $[R:\CS]$ prime to $m$, consider the exact diagram
obtained by \'etale cohomology applied to the Kummer exact sequences related to $\un{\mu}_m$ over $\CS$ and~$R$:
\begin{equation} \label{N^2 diagram}
\xymatrix{
1 \ar[r] & \Pic(R)/m  \ar[r] \ar[d]^{N^{(1)}/m} & H^2_\et(R,\un{\mu}_m) \ar[r]^{i_*} \ar[d]^{N^{(2)}} & \Br(R)[m]   \ar[r] \ar[d]^{N^{(2)}[m]} & 1 \\
1 \ar[r] & \Pic(\CS)/m \ar[r]       & H^2_\et(\CS,\un{\mu}_m) \ar[r]                      & \Br(\CS)[m] \ar[r]        & 1.
}
\end{equation}
The splitting of the two rows then implies the one in the assertion.
\end{proof}

As a result we have two bijections as pointed-sets: the first is $\text{gen}(\un{G}) \cong i(F(\un{G}))$;
the affine case shown in \cite[Cor.~3.2]{Bit2} holds as aforementioned for $S=\emptyset$ as well,
in which case $\Br(C)$ is trivial (\cite[Theorem~4.5.1.(v)]{CTS}) thus $\un{G}$ admits a single genus.
The second bijection is $\ClS(\un{G})\cong i(F(\un{G}))$ unless $\un{G}$ is anisotropic at $S$,
for which it does not have to be injective \cite[Prop.~4.1]{Bit2};
hence when $S$ is empty this bijection is guaranteed.
Combining Lemma \ref{H1G iso to H^2F} with Lemma \ref{decomposition of H2 F admissible} these form (unless $\un{G}$ is anisotropic at $S$)
an isomorphism of finite abelian groups:
\begin{equation}
H^1_\et(\CS,\un{G}) \cong j(F(\un{G})) \times i(F(\un{G})).
\end{equation}

\mk

\section{Twisted-forms} \label{twisted forms}
Before continuing with the classification of $\un{G}$-forms,
we would like to recall the following general construction due to Giraud and prove one related Lemma.
Let $R$ be a unital commutative ring.
A central exact sequence of \'etale $R$-group schemes:
\begin{equation} \label{short exact}
1 \to A \xrightarrow{i} B \xrightarrow{\pi} C \to 1
\end{equation}
induces by \'etale cohomology a long exact sequence of pointed-sets (\cite[III, Lemma~3.3.1]{Gir}):
\begin{equation} \label{LES}
1 \to A(R) \to B(R) \to C(R) \to H^1_\et(R,A) \xrightarrow{i_*} H^1_\et(R,B) \to H^1_\et(R,C)
\end{equation}
in which $C(R)$ acts ''diagonally`` on the elements of $H^1_\et(R,A)$ in the following way:
For $c \in C(R)$, a preimage $X$ of $c$ under $B \to C$ is a $A$-bitorsor, i.e., $X=bA=Ab$ for some $b \in B(R')$,
where $R'$ is a finite \'etale extension of $R$ (\cite[III, 3.3.3.2]{Gir}).
Then given $[P] \in H^1_\et(R,A)$:
\begin{equation} \label{wedge}
c*P = P \overset{A}{\wedge} X = (P \times X)/(pa,a^{-1}x). 
\end{equation}
The exactness of \eqref{LES} implies that $B(R) \xrightarrow{\pi} C(R)$ is surjective if and only if $\ker(i_*)=1$.
This holds true starting with any twisted form ${^P}B$ of $B$, $[P] \in H^1_\et(R,A)$.

\begin{lem} \label{TFAE1}
The following are equivalent:
\begin{itemize}
\item[(1)] the push-forward map $H^1_\et(R,A) \xrightarrow{i_*} H^1_\et(R,B)$ is injective,
\item[(2)] the quotient map ${^P}B(R) \xrightarrow{\pi} C(R)$ is surjective for any $[P] \in H^1_\et(R,A)$,
\item[(3)] the $C(R)$-action on $H^1_\et(R,A)$ is trivial.
\end{itemize}
\end{lem}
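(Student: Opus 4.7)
The strategy is to establish the two equivalences $(1)\Leftrightarrow(3)$ and $(1)\Leftrightarrow(2)$ separately; both hinge on the interplay between the coboundary $\dl:C(R)\to H^1_\fl(R,A)$ and the fibers of $i_*$.

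For $(1)\Leftrightarrow(3)$, I would first unpack formula \eqref{wedge} under the centrality hypothesis. Given $c\in C(R)$ with a local lift $b\in B(R')$, the two-sided $A$-torsor $X=bA=Ab$ has class $\dl(c)$ in the abelian group $H^1_\fl(R,A)$, and the contracted product $P\wedge^A X$ simply represents the sum $[P]+\dl(c)$ there. Thus the $C(R)$-action reduces to translation by $\dl(C(R))$, so its orbits are the cosets of $\Im(\dl)=\ker(i_*)$ in $H^1_\fl(R,A)$, the last equality by exactness of \eqref{LES}. A standard change-of-basepoint argument in the abelian group $H^1_\fl(R,A)$ then shows that every fiber of $i_*$ above a point of its image is one such coset. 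Consequently the $C(R)$-action is trivial if and only if $\ker(i_*)=1$ if and only if $i_*$ is injective.

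For $(1)\Leftrightarrow(2)$, I would apply the observation made immediately before the lemma to each twist. For $[P]\in H^1_\fl(R,A)$, the twist ${^P}B$ sits in a central extension $1\to A\to {^P}B\to C\to 1$ to which \eqref{LES} again applies, giving surjectivity of ${^P}B(R)\xrightarrow{\pi}C(R)$ if and only if the pushforward $i_*^P:H^1_\fl(R,A)\to H^1_\fl(R,{^P}B)$ has trivial kernel at the basepoint. The Giraud twisting bijection \cite[III, 3.3.3.2]{Gir} identifies this kernel with the fiber of the original $i_*$ above $i_*[P]$, so $(2)$ asserts exactly that every such fiber is a singleton, which is $(1)$.

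The main technical point is the basepoint bookkeeping in the twisting bijection for $(1)\Leftrightarrow(2)$; once the coboundary is recognised as translation in the abelian group $H^1_\fl(R,A)$, the remaining content is organisational, drawing on the very Giraud formalism already invoked to define the action \eqref{wedge}.
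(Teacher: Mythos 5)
Your proof is correct and follows essentially the same route as the paper: for $(1)\Leftrightarrow(2)$ both arguments run the twisting bijection $\theta_P$ through the exact sequence for each ${}^P B$, and for $(1)\Leftrightarrow(3)$ both ultimately reduce to the fact that the fibers of $i_*$ are the $C(R)$-orbits. The only stylistic difference is that where the paper cites Giraud's Prop.~III.3.3.3(iv) directly, you unwind the action \eqref{wedge} under centrality to exhibit it as translation by $\dl(C(R))=\ker(i_*)$; this makes the mechanism a bit more transparent but relies on the same change-of-basepoint reasoning (so that $\Im(\dl^P)$ matches $\Im(\dl)$, which in the central case is immediate since twisting by an $i_*(P)$-inner automorphism acts trivially on both $A$ and $C$).
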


\begin{proof}
Consider the exact and commutative diagram (cf.~\cite[III, Lemma~3.3.4]{Gir})
$$\xymatrix{
B(R) \ar[r]^{\pi} & C(R) \ar[r] & H^1_\et(R,A)    \ar[d]_{\cong}^{\theta_P} \ar[r]^{i_*} & H^1_\et(R,B) \ar[d]^{r}_{\cong} \\
{^P} B(R) \ar[r]^{\pi} & C(R) \ar[r] & H^1_\et(R,{^P} A) \ar[r]^{i_*'}                          & H^1_\et(R,{^P} B),
}$$
where the map $i_*'$ is obtained by applying \'etale cohomology to
the sequence~\eqref{short exact}
while replacing $B$ by the twisted group scheme ${^P} B$,
and $\theta_P$ is the induced twisting bijection. \\
$(1) \Leftrightarrow (2)$: The map $i_*$ is injective if and only if $\ker(i_*')$ is trivial for any $A$-torsor $P$.
By exactness of the rows, this is condition~$(2)$.  \\
$(1) \Leftrightarrow (3)$:
By~\cite[Prop.~III.3.3.3(iv)]{Gir}, $i_*$ induces an injection of $H^1_\et(R,A) / C(R)$ into $H^1_\et(R,B)$.
Thus $i_*:H^1_\et(R,A) \to H^1_\et(R,B)$ is injective if and only if $C(R)$ acts on $H^1_\et(R,A)$ trivially.
\end{proof}

Following B. Conrad in \cite{Con2}, we denote the group of outer automorphisms of $\un{G}$ by $\Theta$.

\begin{prop} \label{Theta is Out} (\cite[Prop.~1.5.1]{Con2}).
Assume $\Phi$ spans $X_\BQ$ and that $(X_\BQ, \Phi)$ is reduced.
The inclusion $\Theta \subseteq \textbf{Aut}(\text{Dyn}(\un{G}))$
is an equality, if the root datum is adjoint or simply-connected,
or if $(X_\BQ, \Phi)$ is irreducible and $(\Z \Phi^\vee)^*/\Z \Phi$ is cyclic.
\end{prop}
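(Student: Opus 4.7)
The plan is to prove surjectivity of the natural inclusion $\Theta \hookrightarrow \textbf{Aut}(\text{Dyn}(\un{G}))$, i.e., to lift any diagram automorphism $\sigma$ to an automorphism of the root datum $(X, \Phi, X^\vee, \Phi^\vee)$ modulo the Weyl group $W$. First I would fix a base $\Delta \subseteq \Phi$. Since $\sigma$ permutes $\Delta$ while preserving the Cartan integers $\la \alpha, \beta^\vee \ra$, and since $\Phi$ spans $X_\BQ$ by hypothesis, the assignment $\alpha \mapsto \sigma(\alpha)$ on $\Delta$ extends uniquely by $\BQ$-linearity to an isomorphism $f_\BQ : X_\BQ \to X_\BQ$. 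That $f_\BQ$ permutes $\Phi$ will follow from $\Phi$ being the $W$-orbit of $\Delta$ together with the reflection formula $s_\alpha(\beta) = \beta - \la \beta, \alpha^\vee \ra \alpha$; dually, the transpose $f_\BQ^\vee$ will permute $\Phi^\vee$. The problem then reduces to the lattice preservation: showing $f_\BQ(X) = X$ and $f_\BQ^\vee(X^\vee) = X^\vee$.

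In the adjoint case one has $X = \BZ \Phi$, so $f_\BQ(X) = X$ is immediate from the permutation of $\Phi$, and then $f_\BQ^\vee$ preserves $X^\vee = (\BZ \Phi)^*$ by $\BZ$-duality. The simply-connected case is symmetric, by arguing in the dual direction with $X^\vee = \BZ \Phi^\vee$.

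For the third case I would assume $(X_\BQ, \Phi)$ is irreducible and the quotient $(\BZ \Phi^\vee)^*/\BZ \Phi$ is cyclic. Any admissible $X$ sits as an intermediate lattice $\BZ \Phi \subseteq X \subseteq (\BZ \Phi^\vee)^*$. The adjoint-case argument gives $f_\BQ(\BZ \Phi) = \BZ \Phi$, and the dual argument shows $f_\BQ^\vee$ preserves $\BZ \Phi^\vee$, so by $\BZ$-duality $f_\BQ$ also preserves $(\BZ \Phi^\vee)^*$. Hence $f_\BQ$ descends to a group automorphism of the cyclic quotient $(\BZ \Phi^\vee)^*/\BZ \Phi$. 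Now a cyclic group has at most one subgroup of each order, so every automorphism of it fixes each subgroup setwise; in particular $X/\BZ \Phi$ is preserved, hence $f_\BQ(X) = X$. The dual map then preserves $X^\vee$ by $\BZ$-duality, and the lift is complete.

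The main obstacle is conceptual rather than computational: the key observation is that the only data beyond the Dynkin diagram needed to recover the root datum is the choice of intermediate lattice in $[\BZ \Phi, (\BZ \Phi^\vee)^*]$, and the cyclic-quotient hypothesis is precisely what guarantees that every such intermediate lattice is stable under any group automorphism of the quotient. The adjoint and simply-connected cases are the extremes where the interval collapses to a single lattice. For non-cyclic quotients (e.g.\ $\mathrm{D}_{2n}$, whose center is $(\BZ/2)^2$ and whose half-spin intermediate lattices can be swapped by the diagram automorphism exchanging the two fork-arms), the lift can genuinely fail; verifying that the stated hypotheses exhaust the affirmative cases is the only delicate point.
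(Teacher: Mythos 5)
The paper does not prove this proposition; it is cited verbatim as \cite[Prop.~1.5.1]{Con2}, and your sketch is essentially the argument given there, so it is correct and in the intended spirit. Two small points worth tightening: (i) to see that $f_\BQ$ permutes all of $\Phi$ and not just $\Delta$, note that $f_\BQ s_\alpha f_\BQ^{-1} = s_{\sigma(\alpha)}$ for $\alpha\in\Delta$ (using that $\sigma$ preserves Cartan integers), so $f_\BQ$ normalizes $W$ and hence $f_\BQ(\Phi)=f_\BQ(W\cdot\Delta)=W\cdot\sigma(\Delta)=\Phi$; the compatibility $f_\BQ(\alpha)^\vee=f_\BQ^\vee(\alpha^\vee)$ then propagates from $\Delta$ to all of $\Phi$ by the same $W$-conjugation. (ii) In the cyclic case it is cleaner to say that every subgroup of a finite cyclic group is characteristic (being the unique subgroup of its order), so the induced automorphism of $(\BZ\Phi^\vee)^*/\BZ\Phi$ fixes $X/\BZ\Phi$ setwise, giving $f_\BQ(X)=X$; your $\mathrm{D}_{2n}$ half-spin counterexample for the non-cyclic case is precisely what the paper records in Remark~\ref{non-cyclicity}.
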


\begin{remark} \label{non-cyclicity}
The only case of irreducible $\Phi$ in which the non-cyclicity in Proposition \ref{Theta is Out} occurs,
is of type $\text{D}_{2n} (n \geq 2)$, in which $(\BZ \Phi^\vee)^* / \BZ \Phi \cong (\Z/2)^2$ (cf. \cite[Example~1.5.2]{Con2}).
\end{remark}

\begin{remark} \label{trivial action}
Since $\un{G}$ is reductive, $\textbf{Aut}(\un{G})$ is representable as an $\CS$-group
and admits the short exact sequence of smooth $\CS$-groups (see \cite[XXIV, 3.10]{SGA3},\cite[\S 3]{Con3}):
\begin{equation}  \label{Aut G}
1 \to \un{G}^\ad \to \textbf{Aut}(\un{G}) \to \Theta \to 1.
\end{equation}
Applying \'etale cohomology we get the exact sequence of pointed-sets:
\begin{equation} \label{exact sequence of pointed sets}
\textbf{Aut}(\un{G})(\CS) \to \Theta(\CS) \to H^1_\et(\CS,\un{G}^\ad) \xrightarrow{i_*} H^1_\et(\CS,\textbf{Aut}(\un{G})) \to H^1_\et(\CS,\Theta)
\end{equation}
in which by Lemma \ref{TFAE1} the $\Theta(\CS)$-action is trivial on $H^1_\et(\CS,\un{G}^\ad)$
if and only if $i_*$ is injective, being equivalent to the surjectivity of
$$ ({^P}\textbf{Aut}(\un{G}))(\CS) = \textbf{Aut}({^P}\un{G})(\CS) \to \Theta(\CS)  $$
for all $[P] \in H^1_\et(\CS,\Theta)$ (this action is trivial inside each genus).
\end{remark}

It is a classical fact that $H^1_\et(\CS,\Aut(\un{G}))$ is in bijection with twisted forms of $\un{G}$
up to isomorphism (for a general statement of this correspondence, see \cite[\S 2.2.4]{CF}).
Therefore this pointed-set shall be denoted from now and on by $\TwG$.
This bijection is done by associating any twisted form $\un{H}$ of $\un{G}$
with the $\textbf{Aut}(\un{G})$-torsor $\textbf{Iso}(\un{G},\un{H})$.
If $\un{H}$ is an inner-form of $\un{G}$,
then $[H]$ belongs to $\text{Im}(i_*)$ in \eqref{exact sequence of pointed sets}.

Sequence \eqref{Aut G} splits, provided that $\un{G}$ is quasi-split (as in Section \ref{torsors classification}). 
Recall that $\un{G}$ admits an inner form $\un{G}_0$ which is quasi-split.
Then $\textbf{Aut}(\un{G}_0) \cong \un{G}_0^\ad \rtimes \Theta$
(the outer automorphisms group of the two groups are canonically isomorphic).
This implies by \cite[Lemma~2.6.3]{Gil} the decomposition
\begin{equation} \label{Gille0}
\textbf{Twist}(\un{G}_0) = H^1_\et(\CS,\textbf{Aut}(\un{G}_0)) =
\coprod_{[P] \in H^1_\et(\CS,\Theta)} H^1_\et(\CS,{^P}(\un{G}_0^\ad))/ \Theta(\CS)
\end{equation}
where the quotients are taken modulo the action \eqref{wedge} of $\Theta(\CS)$ on the ${^P}(\un{G}^\ad)$-torsors.
But $\textbf{Twist}(\un{G}_0) = \TwG$ and as $\un{G}_0$ is inner: $H^1_\et(\CS,{^P}(\un{G}_0^\ad)) = H^1_\et(\CS,{^P}(\un{G}^\ad))$,
hence \eqref{Gille0} can be rewritten as:
\begin{equation} \label{Gille}
\TwG = \coprod_{[P] \in H^1_\et(\CS,\Theta)} H^1_\et(\CS,{^P}(\un{G}^\ad))/ \Theta(\CS).
\end{equation}
The pointed-set $H^1_\et(\CS,\Theta)$ 
classifies \'etale extensions of $\CS$ whose automorphism group embeds into $\Theta$.
As all $H^1_\et(\CS,{^P}(\un{G}^\ad))$ are finite, $\TwG$ is finite.
Together with Lemma \ref{H1G iso to H^2F} we get:

\begin{prop} \label{TwG as a disjoint union}
If $\un{G}$ is not of type $\text{A}$ then:
$$ \TwG \cong \coprod_{[P] \in H^1_\et(\CS,\Theta)} H^2_\et(\CS,F({^P}(\un{G}^\ad)))/\Theta(\CS), $$
the $\Theta(\CS)$-action on each component is carried by Lemma \ref{H1G iso to H^2F}
from the one on $H^1_\et(\CS,{^P}(\un{G}^\ad))$,~cf.~\eqref{wedge}.
\end{prop}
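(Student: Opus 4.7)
The plan is to take the Giraud-type decomposition \eqref{Gille} as the starting point,
$$ \TwG = \coprod_{[P] \in H^1_\fl(\CS,\Theta)} H^1_\fl(\CS,{^P}(\un{G}^\ad))/\Theta(\CS), $$
and cohomologically rewrite each summand via Lemma \ref{H1G iso to H^2F}. For every $[P] \in H^1_\fl(\CS,\Theta)$, the twist ${^P}(\un{G}^\ad)$ is an adjoint, semisimple $\CS$-group of the same absolute type as $\un{G}^\ad$, hence is not of type $\text{A}$ under the hypothesis on $\un{G}$. Lemma \ref{H1G iso to H^2F} then furnishes a bijection of pointed sets
$$ \delta_{{^P}(\un{G}^\ad)} : H^1_\fl(\CS,{^P}(\un{G}^\ad)) \xrightarrow{\sim} H^2_\fl(\CS, F({^P}(\un{G}^\ad))) $$
induced by the universal cover \eqref{universal covering} of ${^P}(\un{G}^\ad)$, whose target is a finite abelian group.

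Next I would transport the $\Theta(\CS)$-action on $H^1_\fl(\CS,{^P}(\un{G}^\ad))$ across $\delta_{{^P}(\un{G}^\ad)}$ and pass to quotients summand by summand, obtaining the disjoint union in the statement. For the assertion about outer forms, the terminal arrow in \eqref{exact sequence of pointed sets} sends $\TwG = H^1_\fl(\CS,\textbf{Aut}(\un{G}))$ onto $H^1_\fl(\CS,\Theta)$ (surjectivity follows from the splitting of \eqref{Aut G} in the quasi-split model $\un{G}_0$, together with the identification $\textbf{Twist}(\un{G}_0) = \TwG$), and two $\un{G}$-forms lie in the same fibre precisely when they are inner twists of one another; accordingly, the equivalence classes of $\un{G}$-forms modulo inner twisting biject with $H^1_\fl(\CS,\Theta)$, yielding $|H^1_\fl(\CS,\Theta)|$ non-isomorphic outer forms.

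The main obstacle will be the equivariance check: one must confirm that $\delta_{{^P}(\un{G}^\ad)}$ intertwines the Giraud bitorsor-contraction action \eqref{wedge} of $\Theta(\CS)$ on $H^1_\fl(\CS,{^P}(\un{G}^\ad))$ with the transported action on $H^2_\fl(\CS, F({^P}(\un{G}^\ad)))$. This reduces to the naturality of the coboundary of \eqref{universal covering} with respect to outer twisting, together with the descent identification $F({^P}(\un{G}^\ad)) \cong F(\un{G}^\ad)$ already exploited in the proof of Lemma \ref{Tits class}; once this naturality is in hand, the remainder of the assembly is formal.
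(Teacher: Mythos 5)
Your proposal is correct and takes essentially the same route as the paper: start from the Giraud-type disjoint union \eqref{Gille}, observe that each outer twist ${^P}(\un{G}^\ad)$ is again adjoint, almost-simple, and not of type $\text{A}$, so that Lemma \ref{H1G iso to H^2F} rewrites each summand as $H^2_\fl(\CS,F({^P}(\un{G}^\ad)))$ with the transported $\Theta(\CS)$-action, and read off the outer-form count from the indexing by $H^1_\fl(\CS,\Theta)$. The equivariance worry you flag is moot because the proposition \emph{defines} the $\Theta(\CS)$-action on $H^2_\fl$ as the one carried across by $\delta_{{^P}(\un{G}^\ad)}$, so no naturality check is required.
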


\begin{cor}
When $S=\emptyset$, i.e., over $C$, any outer form of $\un{G}$ has a unique genus on which $\Theta(\CS)$ acts trivially,
hence one has (including for type $\text{A}$):
$$ \TwG \cong \coprod_{[P] \in H^1_\et(\CS,\Theta)} j(F({^P}(\un{G}^\ad))). $$
\end{cor}

Since $C$ is smooth, $\CS$ is a Dedekind ring and any finite \'etale covering of it is
the normalization of $\CS$ (or of $C$ when $S=\emptyset$) in some finite separable extension of $K$, which is
unramified outside $S$.
So we may look on the fundamental groups over the according extension of fields;
The following is the list of all types of absolutely almost-simple $K$-groups (e.g., \cite[p.333]{PR}):
\mk
{
\begin{center} {\small
 \begin{tabular}{|c | c | c | }
 \hline
 Type of $G$             & $F(G^\ad)$              & $\textbf{Aut}(\text{Dyn}(\un{G}))$ 
\\ \hline  \hline
  ${^1}\text{A}_{n-1>0}$        & $\mu_n$                 & $\un{\Z/2}$   \\ \hline
	${^2}\text{A}_{n-1>0}$        & $R_{L/K}^{(1)}(\mu_n)$  & $\un{\Z/2}$   \\ \hline   	
  $\text{B}_n,\text{C}_n,\text{E}_7$          & $\mu_2$                 & $0$   \\ \hline
	${^1}\text{D}_n$   & \begin{tabular}[x]{@{}c@{}} $\mu_4, \ n=2k+1$ \\ $\mu_2 \times \mu_2, \ n=2k$ \end{tabular} & $\un{\Z/2}$ \\ \hline
	${^2}\text{D}_n$   & \begin{tabular}[x]{@{}c@{}} $R_{L/K}^{(1)}(\mu_4), \ n=2k+1$ \\ $R_{L/K}(\mu_2), \ n=2k$ \end{tabular} & $\un{\Z/2}$ \\ \hline
  ${^{3,6}}\text{D}_4$        & $R_{L/K}^{(1)}(\mu_2)$   & $\un{S_3}$   \\ \hline
	${^1}\text{E}_6$            & $\mu_3$                  & $\un{\Z/2}$  \\ \hline
	${^2}\text{E}_6$            & $R_{L/K}^{(1)}(\mu_3)$   & $\un{\Z/2}$  \\ \hline
  $\text{E}_8,\text{F}_4,\text{G}_2$        & $1$                      & $0$     \\ \hline
\end{tabular}
\label{table}
}\end{center}}

\bk

\section{Split fundamental group} \label{Split fundamental group}
In the following we show Proposition \ref{TwG as a disjoint union}.
We start with the simple case in which $\Theta=0$:

\begin{cor} \label{Theta is trivial}
If $\un{G}$ is of the type $\text{B}_{n>1},\text{C}_{n>1},\text{E}_7,\text{E}_8,\text{F}_4,\text{G}_2$,
for which $F(\un{G}^\ad) \cong \un{\mu}_m$ there exists an isomorphism of finite abelian groups
$\TwG \cong \Pic(\CS)/m \times {_m}\Br(\CS)$.
\end{cor}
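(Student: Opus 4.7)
The plan is to read this off directly as a specialization of Proposition~\ref{TwG as a disjoint union}, then to apply the admissibility machinery of Section~\ref{torsors classification} to unwind $H^2_\fl$ into Picard and Brauer components.

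First I would invoke the table preceding Section~\ref{Split fundamental group}: for each of the types $\text{B}_{n>1}$, $\text{C}_{n>1}$, $\text{E}_7$, $\text{E}_8$, $\text{F}_4$, $\text{G}_2$, the Dynkin diagram is rigid, so the column $\textbf{Aut}(\text{Dyn}(\un{G}))$ is $0$. Via Proposition~\ref{Theta is Out} (adjoint root datum, so the inclusion $\Theta \subseteq \textbf{Aut}(\text{Dyn}(\un{G}))$ is an equality) this forces the $\CS$-group $\Theta$ to be trivial. In particular $H^1_\fl(\CS,\Theta)=\{*\}$ and $\Theta(\CS)=1$, so the disjoint union in Proposition~\ref{TwG as a disjoint union} collapses to its single component indexed by the trivial torsor, and the quotient by $\Theta(\CS)$ disappears. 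Since none of the listed types is of type~$\text{A}$, Proposition~\ref{TwG as a disjoint union} applies and yields
\[
\TwG \;\cong\; H^2_\fl\bigl(\CS,\,F(\un{G}^\ad)\bigr).
\]

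Next I would translate the right-hand side. By hypothesis $F(\un{G}^\ad)\cong \un{\mu}_m$, which is the case $R=\CS$ of the first factor type in Definition~\ref{admissible}; hence $F(\un{G}^\ad)$ is admissible. Lemma~\ref{decomposition of H2 F admissible} therefore gives a (split) decomposition
\[
H^2_\fl(\CS,\un{\mu}_m) \;\cong\; j(\un{\mu}_m)\times i(\un{\mu}_m).
\]
Reading Definition~\ref{i} with $R=\CS$ identifies the two factors as $j(\un{\mu}_m)=\Pic(\CS)/m$ and $i(\un{\mu}_m)={_m}\Br(\CS)$. Assembling the two isomorphisms delivers
\[
\TwG \;\cong\; \Pic(\CS)/m \times {_m}\Br(\CS),
\]
as claimed.

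There is no real obstacle here; everything is an immediate specialization of earlier results once the vanishing of $\Theta$ is recorded. The only minor point to flag is that the listed types with $F(\un{G}^\ad)=1$ (namely $\text{E}_8$, $\text{F}_4$, $\text{G}_2$) correspond to $m=1$, in which case both factors on the right-hand side are trivial and the statement reduces to $\TwG=\{*\}$, consistently with the fact that inner forms of a simply-connected-adjoint split exceptional group are classified by the trivial cohomology set.
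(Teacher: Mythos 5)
Your proof is correct and follows essentially the same route as the paper's: invoke Proposition~\ref{TwG as a disjoint union} with $\Theta$ trivial (so the disjoint union collapses to a single component with trivial action), note that $\un{G}$ is not of type $\text{A}$ and $F(\un{G}^\ad)\cong\un{\mu}_m$ is admissible, and then apply Lemma~\ref{decomposition of H2 F admissible} with Definition~\ref{i} at $R=\CS$ to split $H^2_\fl(\CS,\un{\mu}_m)$ as $\Pic(\CS)/m\times{_m}\Br(\CS)$. The only difference is that you spell out the justification that $\Theta$ is trivial via the table and Proposition~\ref{Theta is Out}, and add the sanity check for $m=1$, both of which the paper takes as given; this is harmless elaboration rather than a different argument.
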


\begin{proof}
As $\un{G}$ is not of type $\text{A}$ this derives from Proposition \ref{TwG as a disjoint union},
together with the fact that $\Theta(\CS)=0$ whence there is a single component $H^1_\et(\CS,\un{G}^\ad)$ on which the action of $\Theta(\CS)$ is trivial,
and the description of the isomorphic group $H^2_\et(\CS,F(\un{G}^\ad))$ is as in the split case in Lemma \ref{decomposition of H2 F admissible}.
\end{proof}

\begin{example} \label{SOq}
Given a regular quadratic $\CS$-form $Q$ of rank $2n+1$,
its \emph{special orthogonal group} $\un{G}=\un{\mathrm{SO}}_Q$
is smooth and connected of type $\text{B}_n$ (\cite[Thm.~1.7]{Con1}).
Since $F(\un{G}) = \un{\mu}_2$ we assume $\text{char}(K)$ is odd.
According to Corollary \ref{Theta is trivial} we then get
$$ \TwG \cong \Pic(\CS)/2 \times {_2}\Br(\CS). $$
In case $|S|=1$ and $Q$ is split by an hyperbolic plane,
an algorithm producing explicitly the inner forms of $Q$ is provided in \cite[Algorithm1]{Bit1}.
\end{example}

\mk

\section{Quasi-split fundamental group} \label{Quasi-split fundamental group}
Unless $\un{G}$ is of absolute type $\text{D}_4$,
$\Theta$ is either trivial or equals $\{\text{id},\tau:A \mapsto (A^{-1})^t) \}$. 
In the latter case, $\tau$ acts on the $\un{G}^\ad$-torsors via $X=\un{G}^\ad b$,
where $b$ is an outer automorphism of $\un{G}$, defined over some finite \'etale extension of $\CS$ (see \eqref{wedge}).
In particular:
$$ \tau* \un{G}^\ad=(\un{G}^\ad \times X) /(ga,a^{-1}x), $$
which is the opposite group $(\un{G}^\ad)^\op$, as the action is via $a^{-1}x=x(a^t)$,
($a$ is viewed as an element of $\un{G}^\ad$, not as an inner automorphism).
Now if $\tau$ is defined over $\CS$, then $\textbf{Aut}(\un{G})(\CS) \to \Theta(\CS)$ is surjective
and $(\un{G}^\ad)^\op$ is $\CS$-isomorphic to $\un{G}^\ad$,
hence as $\tau$ is the only non-trivial element in $\Theta(\CS)$,
the map $H^1_\et(\CS,{^P}\Aut(\un{G})) \to H^1_\et(\CS,\Theta)$ is surjective for all $[P] \in H^1_\et(\CS,\un{\Theta})$.
This implies by Remark \ref{trivial action} that $\Theta(\CS)$ acts trivially on $H^1_\et(\CS,\un{G}^\ad)$.
Otherwise, $\un{G}^\ad$ and $(\un{G}^\ad)^\op$ represent two distinct classes in $H^1_\et(\CS,\un{G}^\ad)$, being identified by $\Theta(\CS)$.

\mk

For any extension $R$ of $\CS$ and $L$ of $K$, we denote $\un{G}_R := \un{G} \otimes_{\CS} R$ and $G_L := G \otimes_K L$, respectively.
Let $[A_{\un{G}}]$ be the Tits class of the universal covering $\un{G}^\syc$ of $\un{G}$ (see Definition \ref{Tits algebra}).
This class does not depend on the choice of the representation $\rho$ of $\un{G}^\syc$,
thus its notation is omitted.
Recall that when $F(\un{G})$ splits $w_{\un{G}^\ad}$ defined in \ref{w_G}
coincides with $\Lm_* \circ \dl_{\un{G}}$.
Similarly, when $F(\un{G}) = \text{Res}_{R/\CS}(\un{\mu}_m)$ (\emph{quasi-split}) where $R/\CS$ is finite \'etale,
$\Lm_* \circ \dl_{\un{G}_R}$ and $w_{\un{G}_R^\ad}$ defined over $R$, coincide.

\begin{prop} \label{2 torsion in Br}
Suppose $\Theta \cong \un{\Z/2}$ and that $F(\un{G}^\ad) = \text{Res}_{R/\CS}(\un{\mu}_m)$,
$R$ is finite \'etale over $\CS$. 
Then TFAE:
\begin{itemize}
\item[(1)] $\un{G}_R$ admits an outer automorphism,
\item[(2)] $[A_{\un{G}_R}]$ is $2$-torsion in ${_m}\Br(R)$,
\item[(3)] $\Theta(R)$ acts trivially on $H^1_\et(R,\un{G}^\ad_R)$.
\end{itemize}
If, furthermore, $\un{G}$ is not of type $\text{A}$, or $S=\emptyset$,
then these facts are also equivalent to: 
\begin{itemize}
\item[(4)] $\un{G}$ admits an outer automorphism,
\item[(5)] $[A_{\un{G}}]$ is $2$-torsion in ${_m}\Br(R)$,
\item[(6)] $\Theta(\CS)$ acts trivially on $H^1_\et(\CS,\un{G}^\ad)$.
\end{itemize}
\end{prop}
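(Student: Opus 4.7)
The strategy is to establish the triangle of equivalences (1)$\Leftrightarrow$(2)$\Leftrightarrow$(3) directly over $R$, then to repeat the same arguments over $\CS$ under the extra \emph{not-type-A} hypothesis to obtain (4)$\Leftrightarrow$(5)$\Leftrightarrow$(6), and finally to link the two triangles through Shapiro's lemma.

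For the equivalence (1)$\Leftrightarrow$(3), I would apply Lemma \ref{TFAE1} to the sequence \eqref{Aut G} base-changed to $R$. It identifies triviality of the $\Theta(R)$-action on $H^1_\fl(R, \un{G}^\ad_R)$ with surjectivity of $\textbf{Aut}({^P}\un{G}_R)(R) \to \Theta(R)$ for every $[P]$; specializing to $P$ trivial is exactly statement (1). For the converse, given an $R$-lift $b \in \textbf{Aut}(\un{G}_R)(R)$ of $\tau$, the Giraud bitorsor $X_\tau = \un{G}^\ad \cdot b$ of Remark \ref{Giraud action pf Theta} is trivial over $R$, so $\tau \ast [P] = [P \wedge^{\un{G}^\ad_R} X_\tau]$ reduces to $[P]$ for every $[P]$, which yields (3).

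For (1)$\Leftrightarrow$(2), I would invoke the scheme-theoretic Tits algebra dictionary \cite[Thm.~1]{PS}: the generator $\tau$ of $\Theta$ sends the Tits algebra $A_{\un{G}_R}$ of $\un{G}_R^\sc$ to its opposite. Consequently $\tau$ lifts to an $R$-automorphism of $\un{G}_R^\sc$ iff $[A_{\un{G}_R}] = -[A_{\un{G}_R}]$ in $\Br(R)$, i.e.\ $2[A_{\un{G}_R}] = 0$; since \eqref{w_G} places $[A_{\un{G}_R}]$ in ${_m}\Br(R)$, this is (2). The second triangle (4)$\Leftrightarrow$(5)$\Leftrightarrow$(6) is obtained by copying both arguments verbatim over $\CS$; here the \emph{not-type-A} hypothesis feeds into Lemma \ref{H1G iso to H^2F} to supply the abelian structure on $H^1_\fl(\CS, \un{G}^\ad)$ needed to close the Giraud step in the over-$\CS$ variant. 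Finally, the Shapiro isomorphism $H^2_\fl(\CS, \text{Res}_{R/\CS}(\un{\mu}_m)) \cong H^2_\fl(R, \un{\mu}_m) = {_m}\Br(R)$ identifies the Tits class of $\un{G}$ with that of $\un{G}_R$, so (2) and (5) become the same condition on a single element of ${_m}\Br(R)$, and the two triangles merge into one.

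The main obstacle will be the ``$2$-torsion $\Rightarrow$ outer automorphism lifts'' direction of the Tits step: starting from the Brauer-class equality $[A_{\un{G}_R}] = -[A_{\un{G}_R}]$, one must upgrade an abstract Morita equivalence $A_{\un{G}_R} \sim A_{\un{G}_R}^\op$ to an actual $R$-algebra isomorphism that descends through \cite[Thm.~1]{PS} to an automorphism of $\un{G}_R^\sc$ realizing $\tau$ on the Dynkin diagram. This requires a careful choice of center-preserving representation $\rho$ whose associated $A_\rho$ reconstructs $\un{G}_R^\sc$, and propagating the argument to $\CS$ via Shapiro demands in addition that the two Tits classes truly correspond across base change.
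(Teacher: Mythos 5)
Your plan for the equivalence $(1)\Leftrightarrow(3)$ is essentially the paper's: both invoke Lemma~\ref{TFAE1} (via Remark~\ref{trivial action}) together with the bitorsor description of Remark~\ref{Giraud action pf Theta}, which identifies $\tau\ast[\un{G}^\ad_R]$ with $[(\un{G}^\ad_R)^\op]$. Where you diverge is in bringing in the Tits class. You aim at a \emph{direct} equivalence $(1)\Leftrightarrow(2)$ by arguing ``$\tau$ lifts over $R$ iff $[A_{\un{G}_R}]=-[A_{\un{G}_R}]$,'' and you correctly flag the $(2)\Rightarrow(1)$ direction as the weak point: over a ring, $[A]=[A^\op]$ in $\Br(R)$ is only Morita equivalence, and one cannot in general promote it to an $R$-algebra isomorphism $A\cong A^\op$, let alone descend that to an automorphism of $\un{G}_R$ realizing $\tau$. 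That gap is genuine and is precisely what the paper is designed to sidestep.

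The paper's route instead links $(2)$ to $(3)$ through the cohomological machinery already built up: $(\un{G}^\ad_R)^\op$ represents a class in $H^1_\fl(R,\un{G}^\ad_R)$, Lemma~\ref{Tits class} computes its $w_{\un{G}^\ad_R}$-image as $[A_{\un{G}_R}^\op]-[A_{\un{G}_R}]=-2[A_{\un{G}_R}]$, this vanishes exactly when $(2)$ holds, and $\ker(w_{\un{G}^\ad_R})$ is identified with the principal genus $\text{Cl}_R(\un{G}^\ad_R)$ by Proposition~\ref{sequence of wG}. Combined with Remark~\ref{Giraud action pf Theta} and the diagram~\eqref{inner via outer} this closes the triangle without ever needing to upgrade a Brauer-class equality to an algebra isomorphism. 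In short, the paper never goes $(2)\Rightarrow(1)$ directly; it goes $(2)\Leftrightarrow(3)$ via the $w$-map and lets $(1)\Leftrightarrow(3)$ carry the rest. If you want to salvage a direct Tits-algebra argument, you would need to justify the ``Morita-to-isomorphism'' step, which is a nontrivial extra input.

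Your account of the role of the not-type-A hypothesis is also slightly off. It is not needed to ``supply the abelian structure to close the Giraud step over $\CS$''---Lemma~\ref{TFAE1} is valid over any base and requires no such structure. It is needed to establish the chain of identifications
$H^1_\fl(\CS,\un{G}^\ad)\cong H^2_\fl(\CS,F(\un{G}^\ad))\cong H^2_\fl(R,\un{\mu}_m)\cong H^1_\fl(R,\un{G}^\ad_R)$
(Lemma~\ref{H1G iso to H^2F}, which requires not-type-A, together with Shapiro), and, via Corollary~\ref{genera}, the compatible identification $\text{Cl}_R(\un{G}^\ad_R)\cong\ClS(\un{G}^\ad)$. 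These identifications are what make $(5)$ equivalent to $(2)$ and transfer the triviality of the $\Theta$-action from $R$ to $\CS$; the converse direction $(6)\Rightarrow(3)$ uses the elementary inclusion $\Aut(\un{G})(\CS)\subseteq\Aut(\un{G}_R)(R)$. Your appeal to Shapiro ``to identify the Tits class of $\un{G}$ with that of $\un{G}_R$'' gestures at this, but the precise mechanism is the identification of the $H^1$'s and the principal genera, not a direct match of Tits classes.
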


\begin{proof}
By Lemma \ref{Tits class} the map $\Lm_* \circ \dl_{\un{G}^\ad _R} : H^1_\et(R,\un{G}^\ad _R) \to \Br(R)$ maps
$[\un{H}^\ad]$ to $[A_{\un{H}}] - [A_{\un{G} _R}]$ where $[A_{\un{H}}]$ is the Tits class of $\un{H}^\syc$ for a $\un{G}^\ad _R$-torsor $\un{H}^\ad$.
Consider this combined with the long exact sequence obtained by applying \'etale cohomology to the sequence \eqref{Aut G} tensored with $R$:
\begin{equation} \label{inner via outer}
\xymatrix{
                         &                  &  \text{Cl}_R(\un{G}^\ad_R) \ar@{^{(}->}[d]   \\
\Aut(\un{G}_R)(R) \ar[r] & \Theta(R) \ar[r] &  H^1_\et(R,\un{G}^\ad _R)  \ar[r]^-{i_*} \ar@{->>}[d]^{w_{\un{G}^\ad_R} = \Lm_* \circ \dl_{\un{G}^\ad_R}} & \textbf{Twist}(\un{G}_R)  \\
                         &                  &  {_m}\Br(R)
}
\end{equation}
where $\text{Cl}_R(\un{G}^\ad_R)$ is the principal genus of $\un{G}^\ad_R$
(see \cite[Prop.~3.1]{Bit2}) noting that $F(\un{G}_R^\ad) = \un{\mu}_m$).
Being an inner form of $\un{G}^\ad_R$,
$(\un{G}^\ad_R)^\op$ is obtained by
a representative in $H^1_\et(R,\un{G}^\ad_R)$.
Its $w_{\un{G}^\ad_R}$-image: $[A_{\un{G}_R}^\op]-[A_{\un{G}_R}]$ is trivial
if and only if $A_{\un{G}_R}$ is of order $\leq 2$ in ${_m}\Br(R)$,
which is equivalent to $\Aut(\un{G}_R)(R)$ surjecting on $\Theta(R)$,
and $\Theta(R)$ acting trivially on $H^1_\et(R,\un{G}_R^\ad)$
(see at the beginning of Section \ref{Quasi-split fundamental group}).

\mk

If, furthermore, $\un{G}$ is not of type $\text{A}$ or $S=\emptyset$, then by Lemma \ref{H1G iso to H^2F},
together with the Shapiro Lemma we get the isomorphisms of abelian groups:
\begin{equation} \label{H1 id}
H^1_\et(\CS,\un{G}^\ad) \cong H^2_\et(\CS,F(\un{G}^\ad)) \cong H^2_\et(R,\un{\mu}_m) \cong H^1_\et(R,\un{G}^\ad_R).
\end{equation}
So if $\Theta(R)$ acts trivially on $H^1_\et(R,\un{G}_R^\ad)$, then so does $\Theta(\CS)$ on $H^1_\et(\CS,\un{G}^\ad)$.
On the other hand if it does not, this implies that $\Aut(\un{G}_R)(R) \to \Theta(R) \cong \un{\Z/2}$ is not surjective,
thus neither is $\Aut(\un{G})(\CS) \to \Theta(\CS)$,
which is equivalent to $\Theta(\CS)$ acting non-trivially on $H^1_\et(\CS,\un{G}^\ad)$ by Remark \ref{trivial action}.
Moreover, since $i(F(\un{G}^\ad_R)) = i(F(\un{G}^\ad)) = {_m}\Br(R)$ (Def. \eqref{i}), the identification \eqref{H1 id}
shows that $\text{Cl}_R(\un{G}^\ad_R)$ bijects to $\ClS(\un{G}^\ad)$,
whence $[A_{\un{G}_R}]$ is $2$-torsion in ${_m}\Br(R)$ if and only if $[A_{\un{G}}]$ is.
\end{proof}

If we wish to interpret a $\un{G}$-torsor as a twisted form of some basic form,
we shall need to describe $\un{G}$ first as the automorphism group of such an $\CS$-form.

\begin{example} \label{PGLn}
Let $A$ be a division $\CS$-algebra of degree $n > 2$.
Then $\un{G} = \un{\textbf{SL}}(A)$ of type $\text{A}_{n-1>1}$ is smooth and connected (\cite[Lemma~3.3.1]{Con2}).
It admits a non-trivial outer automorphism~$\tau$.
If the transpose anti-automorphism $A \cong A^\op$ is defined over $\CS$ (extending $\tau$ by inverting again),
then $\tau~\in~\textbf{Aut}(\un{G})(\CS)$.
Otherwise, as $(\un{G}^\ad)^\op$ is not $\CS$-isomorphic by some conjugation to $\un{G}^\ad = \un{\textbf{PGL}}(A)$,
it represents a non-trivial class in $H^1_\et(\CS,\un{G}^\ad = \textbf{Inn}(\un{G}))$,
whilst its image in $\TwG$ is trivial by the inverse isomorphism $x \mapsto x^{-1}$
defined over $\CS$ (say, by the Cramer rule).
So finally $\Theta(\CS)$ acts trivially on $H^1_\et(\CS,\un{\textbf{PGL}}(A))$
if and only if $\text{ord}(A) \leq 2$ in $\Br(\CS)$, as Prop. \ref{2 torsion in Br} predicts.
\end{example}

\subsection{Type $\text{D}_{2k}$} \label{D2k}
Let $A$ be an Azumaya $\CS$-algebra ($\text{char}(K) \neq 2$) of degree $2n$
and let $(f,\s)$ be a \emph{quadratic pair} on $A$, namely, $\s$ is an 
involution on $A$ 
and $f:\text{Sym}(A,\s) = \{ x \in A:\s(x)=x \} \to \CS$ is a linear map.
The scalar $\mu(a) := \s(a) \cdot a$ is called the \emph{multiplier} of $a$.
For $a \in A^\times$ we denote by $\textbf{Int}(a)$ the induced inner automorphism.
If $\s$ is orthogonal, the associated \emph{similitude group} is: 
$$ \un{\textbf{GO}}(A,f,\s) := \{ a \in A^\times: \mu(a) \in \CS^\times, \ f \circ \textbf{Int}(a) = f \},  $$
and the map $a \mapsto \textbf{Int}(a)$ is an isomorphism of the \emph{projective similitude group} $\un{\textbf{PGO}}(A,f,\s) := \textbf{GO}(A,f,\s)/\CS^\times$
with the group of rational points $\textbf{Aut}(A,f,\s)$.
Such a similitude is said to be \emph{proper} if the induced automorphism
of the Clifford algebra $C(A,f,\s)$ is the identity on the center; otherwise it is said to be \emph{improper}.
The subgroup $\un{G}=\textbf{PGO}^+(A,f,\s)$ of these proper similitudes is connected and adjoint,  
called the \emph{projective special similitude group}.
If the discriminant of $\s$ is a square in $\CS^\times$, 
then $\un{G}$ is of type ${^1}\text{D}_n$.
Otherwise of type ${^2}\text{D}_n$.

\mk

When $n=2k$, in order that $\Theta$ captures the full structure of $\textbf{Aut}(\text{Dyn}(\un{G}))$,
we would have to restrict ourselves to the two edges of simply-connected
and adjoint groups (see Remark \ref{non-cyclicity}).

\begin{cor} \label{2D2k}
Let $\un{G}$ be of type ${^2}\text{D}_{2k},k\neq 2$, simply-connected or adjoint. \\
For any $[P] \in H^1_\et(\CS,\un{\Z/2})$ let $R_P$ be the corresponding quadratic \'etale extension of $\CS$.
Then:
$$ \TwG \cong \coprod_{[P] \in H^1_\et(\CS,\un{\Z/2})} \Pic(R_P)/2 \times {_2}\Br(R_P). $$
\end{cor}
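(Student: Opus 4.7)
The plan is to compute $\TwG$ by applying the decomposition \eqref{Gille} to the quasi-split inner form $\un{G}_0$, then to simplify each component via the structure theory of the fundamental group. First, since $2k\neq 4$ and $\un{G}$ is simply-connected or adjoint, Proposition~\ref{Theta is Out} combined with Remark~\ref{non-cyclicity} forces $\Theta=\un{\Z/2}$, so the indexing set of \eqref{Gille} is $H^1_\fl(\CS,\un{\Z/2})$. By Remark~\ref{mu2 torsors} each class $[P]$ corresponds to a quadratic étale $\CS$-algebra $R_P$, with $R_P\cong\CS\times\CS$ when $[P]$ is trivial.

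The key identification is $F({^P}(\un{G}_0^\ad))=\text{Res}_{R_P/\CS}(\un{\mu}_2)$. This rests on the classical fact that the non-trivial automorphism of the Dynkin diagram of $\text{D}_{2k}$ acts on the quasi-split center $F(\un{G}_0^\ad)=\un{\mu}_2\times\un{\mu}_2$ by swapping the two factors, so twisting by the $\un{\Z/2}$-torsor $P$ produces precisely the Weil restriction from $R_P$. Since $\text{Res}_{R_P/\CS}(\un{\mu}_2)$ is admissible of type~(1) in Definition~\ref{admissible}, Lemma~\ref{H1G iso to H^2F} (applicable because $\un{G}$ is of type $\text{D}$, not $\text{A}$) together with Lemma~\ref{decomposition of H2 F admissible} yields
\[
H^1_\fl(\CS,{^P}(\un{G}_0^\ad))\cong H^2_\fl(\CS,\text{Res}_{R_P/\CS}(\un{\mu}_2))\cong\Pic(R_P)/2\times{_2}\Br(R_P).
\]

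Finally, I would eliminate the $\Theta(\CS)$-quotient by appealing to Proposition~\ref{2 torsion in Br} applied to each twisted group ${^P}\un{G}_0$ (of type $\text{D}$, not $\text{A}$, with $F({^P}\un{G}_0^\ad)=\text{Res}_{R_P/\CS}(\un{\mu}_m)$ for $m=2$): condition~(2) of that proposition requires $[A_{{^P}\un{G}_0\otimes R_P}]$ to be $2$-torsion in ${_2}\Br(R_P)$, which is automatic since the ambient group is already $2$-torsion. Hence conditions~(3) and~(6) hold, so $\Theta(\CS)$ acts trivially on $H^1_\fl(\CS,{^P}(\un{G}_0^\ad))$, and assembling the components produces the asserted formula.

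The main obstacle is the uniform identification $F({^P}(\un{G}_0^\ad))=\text{Res}_{R_P/\CS}(\un{\mu}_2)$, especially in the edge case where $[P]$ is trivial (so $R_P\cong\CS\times\CS$ and the Weil restriction reduces to $\un{\mu}_2\times\un{\mu}_2$); once that is in place, the remainder is a direct application of the machinery of Sections~\ref{torsors classification} and~\ref{twisted forms}.
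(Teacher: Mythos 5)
Your proof follows essentially the same route as the paper: both use the disjoint-union decomposition (Proposition~\ref{TwG as a disjoint union}, which packages \eqref{Gille} together with Lemma~\ref{H1G iso to H^2F}), eliminate the $\Theta(\CS)$-quotient because the Tits class is automatically $2$-torsion so Proposition~\ref{2 torsion in Br} applies to each component, and then compute each $H^2_\fl(\CS,F({^P}(\un{G}^\ad)))$ via admissibility and Lemma~\ref{decomposition of H2 F admissible}. One minor slip: in the paper's convention $\un{G}_0$ denotes the quasi-split \emph{inner} form of $\un{G}$, hence itself of type ${}^2\text{D}_{2k}$, so $F(\un{G}_0^\ad)=\text{Res}_{R_0/\CS}(\un{\mu}_2)$ for a nontrivial quadratic $R_0$ rather than $\un{\mu}_2\times\un{\mu}_2$; what you describe is the split ${}^1\text{D}_{2k}$-form, and $F({^P}(\un{G}_0^\ad))$ is really $\text{Res}_{R'/\CS}(\un{\mu}_2)$ where $R'$ records the outer type of ${^P}(\un{G}_0^\ad)$, namely $P$ shifted by the class of $\un{G}_0$ in $H^1_\fl(\CS,\Theta)$ --- but since $H^1_\fl(\CS,\Theta)$ is a group this is a harmless reindexing that the paper's formula glosses over just the same.
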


\begin{proof}
Any form ${^P}(\un{G}^\ad)$ has Tits class $[A_{{^P}\un{G}}]$ of order $\leq 2$ in ${_2}\Br(R_P)$. 
Hence as $\Theta \cong \un{\Z/2}$ and $\un{G}$ is not of type $\text{A}$,
by Proposition \ref{2 torsion in Br} $\Theta(\CS)$ acts trivially on $H^1_\et(\CS,{^P}(\un{G}^\ad))$
for all $P$ in $\Theta(\CS)$.
All fundamental groups are admissible, so the Corollary statement is Proposition \ref{TwG as a disjoint union}
together with the description of each $H^2_\et(\CS,F({^P}(\un{G}^\ad)))$ as in Lemma~\ref{decomposition of H2 F admissible}.
\end{proof}

\mk

\section{Non quasi-split fundamental group} \label{Non quasi-split fundamental group}
When $F(\un{G}^\ad)$ is not quasi-split, we cannot apply the Shapiro Lemma as in \eqref{H1 id}
to gain control on the action of $\Theta(\CS)$ on $H^1_\et(\CS,F(\un{G}^\ad))$.
Still under some conditions this action is provided to be trivial.

\begin{remark} \label{trivial Br}
As opposed to ${_m}\Br(K)$ which is infinite for any integer $m > 1$, ${_m}\Br(\CS)$ is finite.
To be more precise, if $S \neq \emptyset$, $\Sp(\CS)$ is obtained by removing $|S|$ points from the projective curve $C$,
hence $|{_m}\Br(\CS)| = m^{|S|-1}$ (see the proof of \cite[Cor.~3.2]{Bit2}).
When $S=\emptyset$ we have $\Br(C)=1$. 
In particular, if $\un{G}$ is not of absolute type $\text{A}$ and $F(\un{G}^\ad)$ splits over an extension $R$
such that the number of places in $\text{Frac}(R)$ which lie above places in $S$ is $1$, or when $S=\emptyset$,
then $\un{G}^\ad$ can posses only one genus and consequently the $\Theta(\CS)$-action on $H^1_\et(\CS,\un{G}^\ad)$ is trivial.  
\end{remark}

E. Artin in \cite{Art} calls a Galois extension $L$ of $K$ \emph{imaginary}
if no prime of $K$ is decomposed into distinct primes in $L$.
We shall similarly call a finite \'etale extension of $\CS$ \emph{imaginary} if no prime of $\CS$ is decomposed into distinct primes in it.

\begin{lem} \label{mBr equal R imaginary}
If $R$ is imaginary over $\CS$ and $m$ is prime to $[R:\CS]$, then ${_m}\Br(R) = {_m}\Br(\CS)$.
\end{lem}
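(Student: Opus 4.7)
The plan is to combine a restriction--corestriction injectivity argument with the cardinality formula $|{_m}\Br(\cdot)|=m^{|\cdot|-1}$ recalled in Remark \ref{trivial Br}. First I would set $n := [R:\CS]$ and invoke the identity $\mathrm{cor}_{R/\CS} \circ \mathrm{res}_{R/\CS} = [n]$ on $\Br(\CS)$; since $\gcd(m,n)=1$, multiplication by $n$ is an automorphism of ${_m}\Br(\CS)$, so the restriction map is injective on $m$-torsion.

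Next I would compare the two cardinalities. Feeding the Albert--Brauer--Hasse--Noether exact sequence
$$ 0 \to \Br(K) \to \bigoplus_v \Br(K_v) \xrightarrow{\Sigma} \BQ/\BZ \to 0 $$
together with the vanishing of $\Br(\hat{\CO}_\fp)$ at every closed point of $\Sp \CS$ (henselian DVR with finite residue field) identifies $\Br(\CS)$ with $\ker \bigl(\bigoplus_{\fp \in S}\BQ/\BZ \xrightarrow{\Sigma}\BQ/\BZ\bigr)$, giving $|{_m}\Br(\CS)|=m^{|S|-1}$. Writing $L := \mathrm{frac}(R)$ and letting $S'$ be the set of primes of $L$ above $S$, the same computation carried out for $R$, viewed as a Hasse domain of $L$ with infinite places $S'$, yields $|{_m}\Br(R)|=m^{|S'|-1}$.

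Now the imaginary assumption, read in the Artin sense applied to every prime of $K$ (those in $S$ included), ensures that each $\fp \in S$ admits a unique prime above it in $L$, so $|S'|=|S|$. Therefore $\mathrm{res}_{R/\CS}$ is an injection between finite groups of common order $m^{|S|-1}$, hence an isomorphism, which is the claimed equality.

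The hardest point will be justifying $|S'|=|S|$ from the imaginary hypothesis: the definition preceding the Lemma literally speaks only of primes of $\CS$, so one has to adopt the reading -- natural in view of Artin's original formulation being transposed here -- that the imaginary condition also forbids splitting of the distinguished primes in $S$. Once this is granted, the remaining ingredients are standard.
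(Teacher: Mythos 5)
Your proof is correct and takes essentially the same route as the paper: the paper also obtains injectivity of the restriction ${_m}\Br(\CS) \to {_m}\Br(R)$ from the norm composition $\un{\BG}_{m,\CS} \to \text{Res}_{R/\CS}(\un{\BG}_{m,R}) \to \un{\BG}_{m,\CS}$ being multiplication by $[R:\CS]$ (your restriction--corestriction identity), and then concludes by comparing cardinalities via the formula $m^{|S|-1}$ of Remark~\ref{trivial Br}. Your caveat about reading ``imaginary'' in Artin's sense --- forbidding splitting at the primes of $S$ rather than (only) at the primes of $\Sp \CS$ --- is precisely the reading the paper applies implicitly when it asserts that $R$ is obtained by removing exactly $|S|$ points from the projective curve of its fraction field.
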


\begin{proof}
If $S=\emptyset$ and $R/C$ is imaginary then $\Br(R)=\Br(\CS)=1$.
Otherwise, the composition of the induced norm $N_{R/\CS}$ with the diagonal morphism coming from the Weil restriction
\begin{equation*} 
\un{\BG}_{m,\CS} \to \text{Res}_{R/\CS}(\un{\BG}_{m,R}) \xrightarrow{N_{R/\CS}} \un{\BG}_{m,\CS}
\end{equation*}
is the multiplication by $n := [R:\CS]$.
It induces together with the Shapiro Lemma the maps:
$$ H^2_\et(\CS,\un{\BG}_{m,\CS}) \to H^2_\et(R,\un{\BG}_{m,R}) \xrightarrow{N^{(2)}} H^2_\et(\CS,\un{\BG}_{m,\CS}) $$
whose composition is the multiplication by $n$ on $H^2_\et(\CS,\un{\BG}_{m,\CS})$.
Identifying $H^2_\et(*,\un{\BG}_m)$ with $\Br(*)$ and restricting to the $m$-torsion subgroups gives the composition
$$ {_m}\Br(\CS) \to {_m}\Br(R) \xrightarrow{N^{(2)}}{_m}\Br(\CS) $$
being still multiplication by $n$, 
thus an automorphism when $n$ is prime to $m$. 
This means that ${_m}\Br(\CS)$ is a subgroup of ${_m}\Br(R)$.
As $R$ is imaginary over $\CS$, it is obtained by removing $|S|$ points from the projective curve defining its fraction field,
so $|{_m}\Br(R)| = |{_m}\Br(\CS)|=m^{|S|-1}$ by Remark \ref{trivial Br}, and the assertion follows.
\end{proof}

\begin{cor} \label{imaginary same torsion}
If $F(\un{G}) = \text{Res}^{(1)}_{R/\CS}(\un{\mu}_m)$ is admissible and $R/\CS$ is imaginary,
then $i(F(\un{G}))=\ker({_m}\Br(R) \to {_m}\Br(\CS))$ (see Def. \ref{i}) is trivial,
hence $\un{G}$ admits a single genus (cf. \cite[Cor.~3.2]{Bit2}).
\end{cor}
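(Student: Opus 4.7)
The plan is to extract the triviality of $i(F(\un{G}))$ directly from the norm-and-diagonal argument that underlies Lemma \ref{mBr equal R imaginary}, and then invoke Corollary \ref{genera} to translate this into the statement about genera.

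First, I would unwind the definition: since $F(\un{G}) = \text{Res}^{(1)}_{R/\CS}(\un{\mu}_m)$, we have by Definition \ref{i} that
\[
i(F(\un{G})) = \ker\!\left({_m}\Br(R) \xrightarrow{N^{(2)}} {_m}\Br(\CS)\right),
\]
and the admissibility hypothesis says precisely that $n := [R:\CS]$ is prime to $m$.

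Next, I would recall from the proof of Lemma \ref{mBr equal R imaginary} that the composition
\[
{_m}\Br(\CS) \xrightarrow{\text{diag}} {_m}\Br(R) \xrightarrow{N^{(2)}} {_m}\Br(\CS)
\]
arising from $\un{\BG}_{m,\CS} \to \text{Res}_{R/\CS}(\un{\BG}_{m,R}) \xrightarrow{N_{R/\CS}} \un{\BG}_{m,\CS}$ equals multiplication by $n$. Because $\gcd(n,m)=1$, this composition is an automorphism of ${_m}\Br(\CS)$; in particular $N^{(2)}$ is surjective. Now the imaginary hypothesis together with Remark \ref{trivial Br} gives $|{_m}\Br(R)| = |{_m}\Br(\CS)| = m^{|S|-1}$, so both groups are finite of the same cardinality. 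A surjection between finite abelian groups of equal order is a bijection, and hence $\ker(N^{(2)})$ is trivial. This is exactly $i(F(\un{G})) = 0$.

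Finally, I would apply Corollary \ref{genera}: admissibility of $F(\un{G})$ makes $w_{\un{G}}' : \text{gen}(\un{G}) \hookrightarrow i(F(\un{G}))$ a bijection, and the triviality of the target forces $|\text{gen}(\un{G})| = 1$. The main (and in fact only) subtlety is pinning down that $N^{(2)}$ is an isomorphism rather than merely an inclusion of the same cardinality —  but this follows immediately from the factorisation of multiplication-by-$n$ through $N^{(2)}$ once $\gcd(n,m)=1$, so no serious obstacle arises.
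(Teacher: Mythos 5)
Your proof is correct and follows essentially the same route as the paper: Definition~\ref{i} unwinds $i(F(\un{G}))$ to the kernel of $N^{(2)}$, the norm-and-diagonal factorisation of multiplication by $n=[R:\CS]$ from Lemma~\ref{mBr equal R imaginary} (together with admissibility, i.e.\ $\gcd(n,m)=1$) forces $N^{(2)}$ to be surjective, the imaginary hypothesis via Remark~\ref{trivial Br} gives $|{_m}\Br(R)|=|{_m}\Br(\CS)|$ so the surjection is a bijection, and Corollary~\ref{genera} then converts triviality of $i(F(\un{G}))$ into the single-genus statement. The only cosmetic difference is that the paper's lemma phrases the cardinality argument as injectivity of the diagonal embedding while you phrase it as surjectivity of $N^{(2)}$; the two are equivalent given the factorisation.
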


\subsection{Type $\text{E}_6$}
A \emph{hermitian} Jordan triple over $\CS$
is a triple $(A,\fX,U)$ consisting of a quadratic \'etale $\CS$-algebra $A$ with conjugation $\s$,
a free of finite rank $\CS$-module $\fX$,
and a quadratic map $U:\fX \to \Hom_A(\fX^\s,\fX): x \mapsto U_x$,
where $\fX^\s$ is $\fX$ with scalar multiplication twisted by $\s$,
such that $(\fX,U)$ is an (ordinary) Jordan triple as in \cite{McC}.
In particular if $\fX$ is an \emph{Albert} $\CS$-algebra, then it is called an \emph{hermitian Albert triple}.
In that case the associated trace form $T:A \times A \to \CS$ is symmetric non-degenerate and it follows 
that the structure group of $\fX$ agrees with its group of norm similarities.
Viewed as an $\CS$-group, it is reductive with center of rank $1$
and its semisimple part, which we shortly denote $G(A,\fX)$, is simply connected of type $\text{E}_6$.  
It is of relative type ${^1}\text{E}_6$ if $A \cong \CS \times \CS$ and of type ${^2}\text{E}_6$ otherwise. 

\mk

Groups of type ${^1}\text{E}_6$ are classified by four relative types,
among them only ${^1}\text{E}_{6,2}^{16}$ has a non-commutative Tits algebra,
thus being the only type in which $\Theta(\CS) \cong \Z/2$ may act non-trivially on $H^1_\et(\CS,\un{G}^\ad)$.
More precisely, the Tits-algebra in that case is a division algebra $D$ of degree $3$ (cf. \cite[p.58]{Tits66})
and the $\Theta(\CS)$-action is trivial if and only if $\text{ord}([D]) \leq 2$ in $\Br(\CS)$.
But $\text{ord}([D])$ is odd, thus this action is trivial if and only if $D$ is a matrix $\CS$-algebra.

\mk

\begin{center}
  \begin{tikzpicture}[scale=.4]
    \draw (-1,1) node[anchor=east]  {${^1}\text{E}_{6,2}^{16}$};
    \foreach \x in {0,...,4}
    \draw[thick,xshift=\x cm] (\x cm,0) circle (3 mm);
    \foreach \y in {0,...,3}
    \draw[thick,xshift=\y cm] (\y cm,0) ++(.3 cm, 0) -- +(14 mm,0);
    \draw[thick] (4 cm,2 cm) circle (3 mm) circle(5 mm);
    \draw[thick] (4 cm, 3mm) -- +(0, 1.4 cm);
    \draw[thick] (4 cm, 0) circle(5 mm);
  \end{tikzpicture}
\end{center}

In the case of type ${^2}\text{E}_6$, one has six relative types (cf. \cite[p.59]{Tits66}),
among which only ${^2}E_{6,2}^{16''}$ has a non-commutative Tits algebra (cf. \cite[p.211]{Tits71}).
Its Tits algebra is a division algebra of degree $3$ over $R$, and its Brauer class has trivial corestriction in $\Br(\CS)$.
By Albert and Riehm, this is equivalent to $D$ possessing an $R/\CS$-involution.

\mk

 \begin{center}
  \begin{tikzpicture}[scale=.4]
     \draw (-1,1) node[anchor=east]  {${^2}\text{E}_{6,2}^{16''}$};
    \draw[thick] (0,0) circle (3 mm) circle (5 mm);
    \draw[thick] (2 cm ,0) circle (3 mm) circle(5 mm);
   \draw[thick] (4 cm ,1 cm) circle (3 mm);
    \draw[thick] (6 cm,1 cm) circle (3 mm);
     \draw[thick] (4 cm,-1 cm) circle (3 mm);
     \draw[thick] (6 cm,-1 cm) circle (3 mm);
    \draw[thick] (.3 cm,0) -- (1.7 cm, 0);
    \draw[thick] (2.3 cm,.1 cm) -- (3.7 cm, .9 cm);
    \draw[thick] (4.3 cm,1 cm) -- (5.7 cm, 1 cm);
    \draw[thick] (2.3 cm, -.1 cm) -- (3.7 cm, -.9 cm);
    \draw[thick] (4.3 cm, - 1 cm) -- (5.7 cm, - 1cm);
  \end{tikzpicture}
\end{center}

From now on $\sim$ denotes the equivalence relation on the Brauer group which identifies the class of an Azumaya
algebra with the class of its opposite.

\begin{cor} \label{E6}
Let $\un{G}$ be of (absolute) type $\text{E}_6$. 
For any $[P] \in H^1_\et(\CS,\un{\Z/2})$ let $R_P$ be the corresponding quadratic \'etale extension of $\CS$. 
Then
\begin{align*}
\TwG & \cong \Pic(\CS)/3 \times {_3}\Br(\CS)/\sim \\ \nonumber
     & \coprod_{1 \neq [P]} \ker(\Pic(R_P)/3 \to \Pic(\CS)/3) \times (\ker({_3}\Br(R_P) \to {_3}\Br(\CS)))/\sim,
\end{align*}
where $[P]$ runs over $H^1_\et(\CS,\un{\Z/2})$.
The relation $\sim$ is trivial in the first component unless $\un{G}^\ad$ is of type ${^1}\text{E}_{6,2}^{16}$
and is trivial in the other components unless ${^P}(\un{G}^\ad)$ is of type ${^2}E_{6,2}^{16''}$.
\end{cor}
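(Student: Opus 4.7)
The plan is to specialize Proposition~\ref{TwG as a disjoint union} to $\un{G}$ of absolute type $\text{E}_6$. Since $\Theta \cong \un{\Z/2}$, Remark~\ref{mu2 torsors} identifies $H^1_\fl(\CS,\Theta)$ with the set of (classes of) quadratic \'etale extensions of $\CS$: the trivial class indexes the inner forms (of relative type ${^1}\text{E}_6$), and each nontrivial $[P]$ corresponds to a quadratic extension $R_P$ over which ${^P}\un{G}$ becomes inner.

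For each component I would compute $F({^P}(\un{G}^\ad))$ and apply Lemma~\ref{decomposition of H2 F admissible}. When $[P]=1$, $F(\un{G}^\ad) = \un{\mu}_3$ is admissible and split, so by Definition~\ref{i}
$$ H^2_\fl(\CS,\un{\mu}_3) \cong \Pic(\CS)/3 \times {_3}\Br(\CS). $$
When $[P] \neq 1$, the twist ${^P}(\un{G}^\ad)$ is of type ${^2}\text{E}_6$ with $F({^P}(\un{G}^\ad)) = \text{Res}^{(1)}_{R_P/\CS}(\un{\mu}_3)$; this is admissible since $[R_P:\CS]=2$ is coprime to $3$, and the same lemma yields
$$ H^2_\fl(\CS,\text{Res}^{(1)}_{R_P/\CS}(\un{\mu}_3)) \cong \ker(\Pic(R_P)/3 \to \Pic(\CS)/3) \times \ker({_3}\Br(R_P) \to {_3}\Br(\CS)). $$

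Next I would identify the $\Theta(\CS)$-action on each component. By Remark~\ref{Giraud action pf Theta} the nontrivial $\tau \in \Theta(\CS)$ twists an adjoint torsor to its opposite, which under Lemma~\ref{H1G iso to H^2F} combined with the map $w_{{^P}(\un{G}^\ad)}$ of Remark~\ref{wG is lm dl} corresponds to $[A] \mapsto [A^\op]$ on the Brauer summand while acting trivially on the Picard summand. Proposition~\ref{2 torsion in Br}, applied to $\un{G}$ when $[P]=1$ and to ${^P}\un{G}_{R_P}$ after base change when $[P] \neq 1$, shows this action is trivial iff $[A_{{^P}\un{G}}]$ is $2$-torsion in ${_3}\Br(R_P)$; since ${_3}\Br$ has only $3$-torsion this forces $[A_{{^P}\un{G}}]=0$. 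By the Tits classification (\cite{Tits'66}, \cite{Tits'71}) recalled just before the corollary, among ${^1}\text{E}_6$ types only ${^1}\text{E}_{6,2}^{16}$ has a non-trivial (degree-$3$ division) Tits algebra, and among ${^2}\text{E}_6$ types only ${^2}\text{E}_{6,2}^{16''}$ does. So the identification $[A] \sim [A^\op]$ is non-trivial precisely in those two cases, matching the statement.

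The main obstacle will be verifying that Giraud's $\Theta(\CS)$-twist, which a priori acts on the whole pointed-set $H^1_\fl(\CS,{^P}(\un{G}^\ad))$, respects the product decomposition of Lemma~\ref{decomposition of H2 F admissible} --- that is, acts trivially on the Picard summand and as $[A] \mapsto [A^\op]$ on the Brauer summand --- and, in the outer case, tracking this behaviour through the base-change to $R_P$ that returns ${^P}\un{G}_{R_P}$ to an inner form where Proposition~\ref{2 torsion in Br} applies directly.
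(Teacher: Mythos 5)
Your overall framework—specializing Proposition~\ref{TwG as a disjoint union}, decomposing each $H^2_\fl(\CS,F({^P}(\un{G}^\ad)))$ via Lemma~\ref{decomposition of H2 F admissible}, and identifying the $\Theta(\CS)$-action as $[A]\mapsto[A^\op]$ on the Brauer summand—matches the paper's route up to a point. The step that does not go through is the use of Proposition~\ref{2 torsion in Br} for the outer components: that proposition requires $F(\un{G}^\ad)$ to be of the quasi-split form $\text{Res}_{R/\CS}(\un{\mu}_m)$, because its proof runs through the Shapiro-Lemma identification \eqref{H1 id}. When $[P]\neq 1$ one has $F({^P}(\un{G}^\ad))=\text{Res}^{(1)}_{R_P/\CS}(\un{\mu}_3)$, and the paper explicitly flags at the start of Section~\ref{Non quasi-split fundamental group} that the Shapiro argument breaks down here. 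Your proposed workaround—base-changing to $R_P$ so that $({^P}\un{G})_{R_P}$ is inner and applying Proposition~\ref{2 torsion in Br} over $R_P$—only yields information about the $\Theta(R_P)$-action on $H^1_\fl(R_P,({^P}\un{G}^\ad)_{R_P})$. It does not transfer to the $\Theta(\CS)$-action on $H^1_\fl(\CS,{^P}(\un{G}^\ad))$, precisely because there is no Shapiro isomorphism between the two in the $\text{Res}^{(1)}$ case, so the ``tracking through base change'' you flag as the main obstacle is in fact a real gap rather than a detail to verify.

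The paper sidesteps this by not invoking Proposition~\ref{2 torsion in Br} at all in the $\text{E}_6$ proof. Instead it observes that $\Theta(\CS)$ acts trivially on members of the same genus, so one only needs the action on the set of genera, which bijects (Corollary~\ref{genera}) to $i(F({^P}(\un{G}^\ad)))$; the effect there is read off directly from Remark~\ref{Giraud action pf Theta} together with the Tits-index classification of $\text{E}_6$-forms (\cite{Tits'66}, \cite{Tits'71}), which singles out ${^1}\text{E}_{6,2}^{16}$ and ${^2}\text{E}_{6,2}^{16''}$ as the only relative types with a non-commutative Tits algebra. You do invoke that classification at the end, which is the correct final input, but you should replace the intermediate appeal to Proposition~\ref{2 torsion in Br} over $R_P$ by the paper's genus-level argument (or supply an independent justification of the $\Theta(\CS)$-action on the $\text{Res}^{(1)}$ summand).
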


\begin{proof}
The group $\Theta(\CS)$ acts trivially on members of the same genus,
so it is sufficient to check its action on the set of genera for each type.
Since $F({^P}(\un{G}^\ad))$ is admissible for any $[P] \in H^1_\et(\CS,\Theta)$,
by \cite[Cor.~3.2]{Bit2} the set of genera of each ${^P}(\un{G}^\ad)$
bijects as a pointed-set to $i(F({^P}(\un{G}^\ad)))$,
so the assertion is Proposition \ref{TwG as a disjoint union}
together with Lemma~\ref{decomposition of H2 F admissible}.
The last claims are retrieved from the above discussion on the trivial action of $\Theta(\CS)$
when ${^P}(\un{G}^\ad)$ is not of type ${^1}\text{E}_{6,2}^{16}$ or ${^2}E_{6,2}^{16''}$.
\end{proof}

\begin{example} \label{Example E6}
Let $C$ be the elliptic curve $Y^2Z = X^3 + XZ^2 + Z^3$ defined over $\BF_3$.
Then:
$$ C(\BF_3) = \{ (1:0:1), (0:1:2), (0:1:1), (0:1:0) \}. $$
Removing the $\BF_3$-point $\iy = (0:1:0)$ the obtained smooth affine curve $C^\af$ is $y^2 = x^3 + x + 1$.
Letting $\Oiy = \BF_3[C^\af]$ we have $\Pic(\Oiy) \cong C(\BF_3)$ (e.g., \cite[Example~4.8]{Bit1}).
Among the affine supports of points in $C(\BF_3) - \{ \iy \}$:
$$ \{ (1,0), (0,1/2) = (0,2), (0,1) \}, $$
only $(1,0)$ has a trivial $y$-coordinate thus being of order $2$ (according to the group law there),
to which corresponds the fractional ideal $P = ( x-1,y )$ of order $2$ in $\Pic(\Oiy)$. 
As $\Pic(\Oiy)/3=1$ and $\Br(\Oiy)=1$, 
a form of type ${^1}\text{E}_6$ has no non-isomorphic \emph{inner} form,
while a form of type ${^2}\text{E}_6$ may have more; 
for example $R=\Oiy \oplus P$ being geometric and \'etale cannot be imaginary over $\Oiy$,
which means it is obtained by removing two points from a projective curve, thus
$$ \ker({_3}\Br(R) \to {_3}\Br(\Oiy))/\sim = {_3}\Br(R)/\sim = \{[R],[A],[A^\text{op}] \}/\sim = \{[R],[A]\}, $$
hence an $\Oiy$-group of type $\mathrm{E}_6$ splitting over $R$ admits a non-isomorphic inner form. 
\end{example}

\subsection{Type $\text{D}_{2k+1}$}
Recall from Section \ref{D2k} that an adjoint $\CS$-group $\un{G}$ of absolute type $\text{D}_{n}$
can be realized as $\un{\textbf{PGO}}^+(A,\s)$ where $A$ is Azumaya of degree $2n$ and $\s$ is an orthogonal involution on $A$.
Suppose $n$ is odd. 
If $\un{G}$ is of relative type ${^1}\text{D}_n$ then $F(\un{G}) = \un{\mu}_4$ is admissible,
thus not being of absolute type $\text{A}$, $\ClS(\un{G})$ bijects to $j(\un{\mu}_4) = \Pic(\CS)/4$
and $\text{gen}(\un{G})$ bijects to $i(\un{\mu}_4) = {_4}\Br(\CS)$.
Otherwise, when $\un{G}$ is of type ${^2}\text{D}_n$,
then $F(\un{G}) = \text{Res}^{(1)}_{R/\CS}(\un{\mu}_4)$ where $R/\CS$ is quadratic.
Again not being of absolute type $\text{A}$, $\ClS(\un{G}) \cong j(F(\un{G})) = \ker(\Pic(R)/4 \to \Pic(\CS)/4)$,
but here, as $F(\un{G})$ is not admissible, by \cite[Cor.~3.2]{Bit2} $\text{gen}(\un{G})$
only injects in $i(F(\un{G}))=\ker({_4}\Br(R) \to {_4}\Br(\CS))$.
If $R/\CS$ is imaginary, then by Lemma \ref{mBr equal R imaginary} $i(F(\un{G}))=1$.
Altogether by Proposition \ref{TwG as a disjoint union} we get:

\begin{cor} \label{D 2k+1}
Let $\un{G}$ be of (absolute) type $\text{D}_{2k+1}$. 
For any $[P] \in H^1_\et(\CS,\un{\Z/2})$ let $R_P$ be the corresponding quadratic \'etale extension of $\CS$.
Then there exists an exact sequence of pointed-sets 
\begin{align*}
\TwG &\hookrightarrow \Pic(\CS)/4 \times \coprod_{1 \neq [P]} \ker(\Pic(R_P)/4 \to \Pic(\CS)/4)    \\ \nonumber
     &\times \left({_4}\Br(\CS)/\sim \times \coprod_{1 \neq [P]} (\ker({_4}\Br(R_P) \to {_4}\Br(\CS)))/\sim \right),
\end{align*} 
where $[P]$ runs over $H^1_\et(\CS,\un{\Z/2})$ and $[A] \sim [A^\op]$.
This map surjects onto the first component.
Whenever $R_P/\CS$ is imaginary $\ker({_4}\Br(R_P) \to {_4}\Br(\CS))=1$ and this map is a bijection. 
\end{cor}

\begin{example}
Let $\Oiy = \BF_q[x]$ ($q$ is odd) obtained by removing $\iy = (1/x)$ from the projective line over $\BF_q$. 
Suppose $q \in 4\N-1$ so $-1 \notin \BF_q^2$,
and let $\un{G} = \un{\textbf{SO}}_{10}$ be defined over $\Oiy$.
The discriminant of an orthogonal form $Q_B$ induced by an $n \times n$ matrix $B$ is $\text{disc}(Q_B) = (-1)^{\fc{n(n-1)}{2}}\det(B)$.
As $\text{disc}(Q_{1_{10}}) = -1$ is not a square in $\Oiy$, $\un{G}$ is considered of type ${^2}\text{D}_5$.
It admits a maximal torus $\un{T}$ containing five $2 \times 2$ rotations blocks
$\left( \begin{array}{cc}
     a & b   \\
    -b & a \\
\end{array}\right): a^2 +b^2 =1$
on the diagonal.
Over $R=\Oiy[i]$ such block is diagonalizable;
it becomes $\diag(t,t^{-1})$. 
The obtained diagonal torus $\un{T}_s' = P \un{T}_s P^{-1}$
where $\un{T}_s = \un{T} \otimes R$ and $P$ is some invertible $10 \times 10$ matrix over $R$,
is split and $5$-dimensional, so may be identified with the $5 \times 5$ diagonal torus,
whose positive roots are:
$$ \a_1 = \ve_1-\ve_2, \ \a_2 =\ve_2-\ve_3, \ \a_3 =\ve_3-\ve_4, \ \a_4 = \ve_4 - \ve_5, \ \a_5 = \ve_4 + \ve_5. $$ 
Let $g$ be the matrix differing from the $10 \times 10$ unit only at the last $2 \times 2$ block,
being
$\left( \begin{array}{cc}
     0 & 1   \\
     1 & 0 \\
\end{array}\right)$.
Then $\det(g)=-1$ thus $\text{disc}(Q_g)=1$ where $Q_g$ is the induced quadratic form.
This means that $\un{G}' = \un{\textbf{SO}}(Q_g)$ of type ${^1}\text{D}_5$ is the unique outer form of $\un{G}$
(up to $\CS$-isomorphism).
Then $\Theta = \textbf{Aut}(\text{Dyn}(G))$ acts on $\Lie(g \un{T}'_s g^{-1})$ by mapping the last block
$\left( \begin{array}{cc}
     0 & \ln(t)   \\
     -\ln(t) & 0 \\
\end{array}\right)$
to
$\left( \begin{array}{cc}
     0 & -\ln(t)   \\
     \ln(t) & 0 \\
\end{array}\right)$
and so swapping the above two roots $\a_4$ and $\a_5$. 
Since $\Oiy$ and $R$ are PIDs, their Picard groups are trivial. 
As only one point was removed in both domains also $\Br(\Oiy)=\Br(R)=1$.
We remain with only the two above forms, i.e., $\TwG = \{[\un{G}], [\un{G}']\}$.

\mk

The same holds for $\CS=\BF_q[x,x^{-1}]$:
again it is a UFD thus $\un{G} = \un{\textbf{SO}}_{10}$ defined over it still posseses only one non-isomorphic outer form.
As $\CS$ is obtained by removing two points from the projective $\BF_q$-line,
this time ${_4}\Br(\CS)$ is not trivial, but still equals ${_4}\Br(\CS)$, so:
$\ker({_4}\Br(R) \to {_4}\Br(\CS)) = 1$. 
\end{example}

\subsection{Type $\text{D}_4$}
This case deserves a special regard as $\Theta$ is the symmetric group $\un{S_3}$
when $\un{G}$ is adjoint or simply-connected (cf. Prop. \ref{Theta is Out}).
Suppose $C$ is an Octonion $\CS$-algebra with norm $N$.
For any similitude $t$ of $N$ (see Section \ref{D2k}) there exist similitudes $t_2$ and $t_3$ such that
$$ t_1(xy) = t_2(x) \cdot t_3(y) \ \ \forall x,y \in C. $$
Then the mappings:
\begin{align}
\a  : [t_1] \mapsto [t_2], \ \b  : [t_1] \mapsto [\hat{t}_3]
\end{align}
where $\hat{t}(x) := \mu(t)^{-1} \cdot t(x)$, satisfy $\a^2 = \b^3 = \text{id}$ and generate $\Theta = \textbf{Out}(\textbf{PGO}^+(N)) \cong \un{S_3}$.

\begin{center}
  \begin{tikzpicture}[scale=.4]
     \draw (-1,1) node[anchor=east]  {${^1}\text{D}_{4}$};
   \draw[thick] (0,0) circle (3 mm);
    \draw[thick] (2 cm ,0) circle (3 mm);
   \draw[thick] (4 cm ,1 cm) circle (3 mm);
     \draw[thick] (4 cm,-1 cm) circle (3 mm);
  \draw[thick] (.3 cm,0) -- (1.7 cm, 0);
    \draw[thick] (2.3 cm,.1 cm) -- (3.7 cm, .9 cm);
    \draw[thick] (2.3 cm, -.1 cm) -- (3.7 cm, -.9 cm);
  \end{tikzpicture}
\end{center}
Having three conjugacy classes,
there are three classes of outer forms of $\un{G}$ (cf. \cite[p.253]{Con2}),
which we denote as usual by ${^1}\text{D}_4$, ${^2}\text{D}_4$ and ${^{3,6}}\text{D}_4$.
The groups in the following table are the generic fibers of these outer forms,
$L/K$ is the splitting extension of $F(G^\ad)$ (note that in the case ${^6}D_4$ $L/K$ is not Galois):

{
\begin{center} {\small
 \begin{tabular}{|c | c |c| }
 \hline
 Type of $G$           & $F(G^\ad)$              & \ \ $[L:K]$ \ \  \\ \hline  \hline
	${^1}D_4$            &  $\mu_2 \times \mu_2$   & 1  \\ \hline
	${^2}D_4$            & $R_{L/K}(\mu_2)$        & 2  \\ \hline
  ${^{3,6}}D_4$        & $R_{L/K}^{(1)}(\mu_2)$  & 3  \\ \hline
\end{tabular}
\label{table D4}
}\end{center}}

Starting with $\un{G}$ of type ${^1}\text{D}_4$,
one sees that $F({^P}(\un{G}^\ad))$ -- splitting over some corresponding extension $R / \CS$ --
is admissible for any $[P] \in H^1_\et(\CS,\Theta)$,
thus according to Lemma \ref{decomposition of H2 F admissible}
$$ \forall [P] \in H^1_\et(\CS,\Theta): \ \ H^2_\et(\CS,F({^P}(\un{G}^\ad))) \cong j(F({^P}(\un{G}^\ad))) \times i(F({^P}(\un{G}^\ad))).  $$
The action of $\Theta(\CS)$ is trivial on the first factor, classifying torsors of the same genus,
so we concentrate on its action on $i(F({^P}(\un{G}^\ad)))$.
Since $\Theta \not\cong \un{\Z/2}$ we cannot use Prop. \ref{2 torsion in Br},
but we may still imitate its arguments:

The group $\Theta(\CS)$ acts non-trivially on $H^1_\et(\CS,{^P}(\un{G}^\ad))$ for some $[P] \in H^1_\et(\CS,\Theta)$
if it identifies two non isomorphic torsors of ${^P}(\un{G}^\ad)$. 
The Tits algebras of their universal coverings lie in $({_2}\Br(\CS))^2$ if ${^P}(\un{G}^\ad)$ is of type ${^1}\text{D}_4$,
i.e., if $P$ belongs to the trivial class in $H^1_\et(\CS,\Theta)$,
in ${_2}\Br(R)$ for $R$ quadratic \'etale over $\CS$ if ${^P}(\un{G}^\ad)$ is of type ${^2}\text{D}_4$, i.e., if $[P] \in {_2}H^1_\et(\CS,\Theta)$,
and in $\ker({_2}\Br(R) \to {_2}\Br(\CS))$ for a cubic \'etale extension $R$ of $\CS$ if ${^P}(\un{G}^\ad)$ is one of the types ${^{3,6}}\text{D}_4$,
i.e., if $[P] \in {_3}H^1_\et(\CS,\Theta)$.
Therefore these Tits algebras must be $2$-torsion,
which means that the two torsors are $\CS$-isomorphic in the first case and $R$-isomorphic in the latter three.
If $F({^P}(\un{G}^\ad))$ is quasi-split this means (by the Shapiro Lemma)
that $\Theta(\CS)$ acts trivially on $H^1_\et(\CS,{^P}(\un{G}^\ad))$.
If $F({^P}(\un{G}^\ad))$ is not quasi-split,
according to Corollary \ref{imaginary same torsion} if $R$ is imaginary over $\CS$ then $i(F({^P}(\un{G}^\ad)))=1$.

\mk

If a quadratic form $Q$ has a trivial discriminant on a vector space $V$,
the Tits algebras of the group are $\End(V)$ and the two components $\C_+(Q), \C_-(Q)$ of the even Clifford algebra of $Q$,
and the triality automorphism cyclically permutes those three.
More generally, if the group is represented as $\textbf{PGO}^+(A,\sigma)$
for some orthogonal involution of trivial discriminant on a central simple algebra A of degree $8$,
triality automorphisms permute $A$ and the two components of the Clifford algebra $\C(A,\sigma)$;
Altogether we finally get:

\begin{cor}
Let $\un{G}$ be of (absolute) type $\text{D}_4$ being simply-connected or adjoint. \\
For any $[P] \in H^1_\et(\CS,\Theta)$ let $R_P$ be the corresponding \'etale extension of $\CS$.
Then:
\begin{align*}
\TwG &\cong   (\Pic(\CS)/2 \times {_2}\Br(\CS))^2                                          \\ \nonumber
     &\coprod_{1 \neq [P] \in {_2}H^1_\et(\CS,\Theta)} \Pic(R_P)/2 \times {_2}\Br(R_P)                                                  \\ \nonumber
		 &\coprod_{1 \neq [P] \in {_3}H^1_\et(\CS,\Theta)} \ker(\Pic(R_P)/2 \to \Pic(\CS)/2) \times (\ker({_2}\Br(R_P) \to {_2}\Br(\CS))) / \Theta(\CS).
\end{align*}
If $R_P$ is imaginary over $\CS$, then $\ker({_2}\Br(R_P) \to {_2}\Br(\CS))=1$.
\end{cor}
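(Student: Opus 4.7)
The plan is to specialise Proposition~\ref{TwG as a disjoint union} to the case $\Theta \cong \un{S_3}$ and to partition the indexing set $H^1_\fl(\CS,\Theta)$ according to the order of the class $[P]$. The trivial element indexes inner forms of type ${^1}\text{D}_4$; the non-trivial $2$-torsion classes index ${^2}\text{D}_4$-forms and correspond to quadratic \'etale extensions $R_P/\CS$; the $3$-torsion classes index ${^{3,6}}\text{D}_4$-forms and correspond to cubic \'etale extensions, in view of Remark~\ref{etale via Galois}. On each stratum I would read off $F({^P}(\un{G}^\ad))$ from the table, obtaining $\un{\mu}_2 \times \un{\mu}_2$, $\text{Res}_{R_P/\CS}(\un{\mu}_2)$, and $\text{Res}^{(1)}_{R_P/\CS}(\un{\mu}_2)$, respectively.

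Next I would check admissibility of each $F({^P}(\un{G}^\ad))$ in the sense of Definition~\ref{admissible}: the first two factors are obviously admissible, and the third is admissible because $[R_P:\CS]=3$ is prime to $m=2$. Lemma~\ref{decomposition of H2 F admissible} then decomposes $H^2_\fl(\CS,F({^P}(\un{G}^\ad)))$ as a product $j(F({^P}(\un{G}^\ad))) \times i(F({^P}(\un{G}^\ad)))$, and combining with the Shapiro Lemma for Weil restrictions yields the three summands in the statement, namely $(\Pic(\CS)/2 \times {_2}\Br(\CS))^2$, $\Pic(R_P)/2 \times {_2}\Br(R_P)$, and $\ker(\Pic(R_P)/2 \to \Pic(\CS)/2) \times \ker({_2}\Br(R_P) \to {_2}\Br(\CS))$.

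The main obstacle is controlling the $\Theta(\CS)$-action, which for $\Theta \cong \un{S_3}$ cannot be handled by Proposition~\ref{2 torsion in Br} directly. I would imitate its argument using Lemma~\ref{Tits class} and Remark~\ref{Giraud action pf Theta}: any two torsors identified by $\Theta(\CS)$ have Tits algebras differing by a $2$-torsion element. In the ${^1}\text{D}_4$ and ${^2}\text{D}_4$ cases $F({^P}(\un{G}^\ad))$ is quasi-split, so Shapiro identifies $i(F({^P}(\un{G}^\ad)))$ with $({_2}\Br(\CS))^2$ or ${_2}\Br(R_P)$, groups already of exponent~$2$; the two torsors therefore coincide and the action is trivial, explaining the absence of a quotient in the first two components. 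In the ${^{3,6}}\text{D}_4$ case $F({^P}(\un{G}^\ad))$ fails to be quasi-split, so the Shapiro identification is unavailable and the quotient by $\Theta(\CS)$ must be retained.

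Finally, the imaginary-extension claim follows from a direct invocation of Corollary~\ref{imaginary same torsion}: when $R_P/\CS$ is imaginary and $m=2$ is prime to $[R_P:\CS]=3$, that corollary forces $i(F({^P}(\un{G}^\ad)))=\ker({_2}\Br(R_P) \to {_2}\Br(\CS))=1$, collapsing the Brauer factor in the third summand.
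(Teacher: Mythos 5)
Your proposal follows essentially the same route as the paper: partitioning $H^1_\fl(\CS,\Theta)$ by the order of $[P]$, reading off $F({^P}(\un{G}^\ad))$ from the table, checking admissibility (with the same observation that $3$ is prime to $2$ for the ${}^{3,6}\text{D}_4$ stratum), invoking Lemma~\ref{decomposition of H2 F admissible} and Shapiro, imitating the Tits-class argument of Proposition~\ref{2 torsion in Br} to kill the $\Theta(\CS)$-action on the quasi-split strata while retaining the quotient on the non-quasi-split one, and finally appealing to Corollary~\ref{imaginary same torsion} for the imaginary case. The reasoning and sequence of lemmas match the paper's.
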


\mk

\section{The anisotropic case} \label{anisotropic case}
Now suppose that $\un{G}$ 
does admit a twisted form such that the generic fiber of its universal covering is anisotropic at $S$.
As previously mentioned, such group must be of absolute type $\text{A}$ and $S \neq \emptyset$.
Over a local field $k$, an outer form of a group of type ${^1}\text{A}$ 
which is anisotropic, must be the special unitary group arising by some hermitian form $h$
in $r$ variables over a quadratic extension of $k$ or over a quaternion $k$-algebra (\cite[\S 4.4]{Tits79}).

\mk

A \emph{unitary} $\CS$-group is $\un{\textbf{U}}(B,\s) := \textbf{Iso}(B,\s)$
where $B$ is a non-split quaternion Azumaya defined over an \'etale quadratic extension $R$ of $\CS$
and $\s$ is a unitary involution on $B$, i.e., whose restriction to the center $R$ is not the identity.
The \emph{special unitary group} is the kernel of the reduced norm:
$$ \un{\textbf{SU}}(B,\s) := \ker(\Nrd:\un{\textbf{U}}(B,\tau) \surj \un{\textbf{GL}}_1(R)). $$
These are of relative type ${^2}\text{C}_{2m}$ ($m \geq 2$) (\cite{Tits79}, loc. sit.)
and isomorphic over $R$ to type ${^1}\text{A}_{2m-1}$.

\mk

So in order to determine exactly when $H^1_\et(\CS,\un{G}^\syc)$ \emph{does not} vanish,
we may restrict ourselves to $\CS$-groups whose universal covering is either $\un{\textbf{SL}}_1(A)$
or $\un{\textbf{SU}}(B,\s)$.
In the first case, the reduced norm applied to the units of $A$ forms the short exact sequence of smooth $\CS$-groups:
\begin{equation} \label{SES SL(A)}
1 \to \un{\textbf{SL}}_1(A) \to \un{\textbf{GL}}_1(A) \xrightarrow{\Nrd}  \un{\BG}_m \to 1.
\end{equation}
Then \'etale cohomology gives rise to the long exact sequence:
\begin{equation} \label{LES SL(A)}
1 \to \CS^\times \big / \Nrd(A^\times) \to H^1_\et(\CS,\un{\textbf{SL}}_1(A))
                                          \xrightarrow{i_*} H^1_\et(\CS,\un{\textbf{GL}}_1(A)) \xrightarrowdbl{\Nrd_*} H^1_\et(\CS,\un{\BG}_m) \cong \Pic(\CS)
\end{equation}
in which $\Nrd_*$ is surjective since $\un{\textbf{SL}}_1(A)$
is simply-connected and $\CS$ is of Douai-type (see above).

\begin{definition}
We say that the \emph{local-global Hasse principle} holds for $\un{G}$ if $h_S(\un{G})=|\ClS(\un{G})|=1$.
\end{definition}

Thus the Hasse principle says that an $\CS$-group is $\CS$-isomorphic to $\un{G}$
if and only if it is $K$-isomorphic to it. 
This is automatic for simply-connected groups 
which are not of type $\text{A}$ or when $S=\emptyset$ for which by Lemma \ref{H1G iso to H^2F} $H^1_\et(\CS,\un{G}) \cong H^2_\et(\CS,F(\un{G}))$
is trivial.

\begin{cor}
Let $\un{G} = \un{\textbf{SL}}_1(A)$ where $A$ is a quaternion $\CS$-algebra. 
\begin{itemize} 
\item [(1)] 
If $\Nrd: A^\times \to \CS^\times$ is not surjective, then the Hasse principle does not hold for $\un{G}$. 
\item [(2)] 
If the generic fiber $G$ is isotropic at $S$, then $\TwG$ is in bijection as a pointed-set with the abelian group $\Pic(\CS)/2 \times {_2}\Br(\CS)$. 
\end{itemize} 
\end{cor}


\begin{proof} 
$(1)$ The generic fiber $\textbf{SL}_1(A)$ is simply-connected thus due to Harder $H^1(K,\textbf{SL}_1(A))=1$, 
which indicates that $\un{\textbf{SL}}_1(A)$ admits a single genus (cf. Section \ref{torsors classification}),
i.e.,  $H^1_\et(\CS,\un{\textbf{SL}}_1(A))$ is equal to $\ClS(\un{\textbf{SL}}_1(A))$.  
By the exactness of sequence \eqref{LES SL(A)}, $H^1_\et(\CS,\un{\textbf{SL}}_1(A))$ cannot vanish if $\Nrd(A^\times) \neq \CS^\times$. \\ 
(2) Being of type $\text{A}_1$, $\un{G} = \un{\textbf{SL}}_1(A)$ does not admit a non-trivial outer form,
which implies that $\TwG = H^1_\et(\CS,\un{G}^\ad)$.
The short exact sequence of the universal covering of $\un{G}^\ad = \un{\textbf{PGL}}_1(A)$ with fundamental group $\un{\mu}_2$,
induces the long exact sequence (cf. \eqref{delta}):
$$ H^1_\et(\CS,\un{\textbf{SL}}_1(A)) \to  H^1_\et(\CS,\un{\textbf{PGL}}_1(A)) \xrightarrowdbl{\dl_{\un{G}^\ad}} H^2_\et(\CS,\un{\mu}_2)  $$
in which since $H^1_\et(\CS,\un{\textbf{SL}}_1(A))$ is trivial (due to strong approximation when $G$ is isotropic at $S$), 
the rightmost term is isomorphic by Lemma \ref{decomposition of H2 F admissible} to $\Pic(\CS)/2 \times {_2}\Br(\CS)$. 
\end{proof}

\begin{example}
Let $C$ be the projective line defined over $\BF_3$ and $S = \{t,t^{-1}\}$.
Then $K=\BF_3(t)$ and $\CS = \BF_3[t,t^{-1}]$.
For the quaternion $\CS$-algebra $A=(i^2=-1,j^2=-t)_{\CS}$ we get:
$$ \forall x,y,z,w \in \CS: \ \Nrd(x + yi + zj + wk) = x^2 + y^2 + t(z^2+w^2) $$
which shows that $\Nrd(A^\times) = \CS^\times = \BF_3^\times \cdot t^n, n \in \Z$. 
As $\CS$ is a UFD, the Hasse principle holds for $\un{G} = \un{\textbf{SL}}_1(A)$,   
though its generic fiber $G \cong \mathrm{Spin}_q$ for $q(x,y,z)=x^2+y^2+tz^2$ is anisotropic at $S$ (cf. \cite[Lemma~6]{IKR}).  
Also as $|{_2}\Br(\CS)|=2^{|S|-1}=2$ we have two distinct classes in $\TwG$, namely, $[\un{G}]$ and $[\un{G}^\op]$. 
\end{example}

Similarly, applying \'etale cohomology to the exact sequence of smooth $\CS$-groups:
$$ 1 \to \un{\textbf{SU}}(B,\s) \to \un{\textbf{U}}(B,\s) \xrightarrow{\Nrd} \un{\textbf{GL}}_1(R) \to 1 $$
induces the exactness of:
$$ 1 \to R^\times / \Nrd(\un{\textbf{U}}(B,\s)(\CS)) \to H^1_\et(\CS,\un{\textbf{SU}}(B,\s)) \to H^1_\et(\CS,\un{\textbf{U}}(B,\s)) \xrightarrow{\Nrd_*} H^1_\et(\CS,\textbf{Aut}(R)). $$

Let $A = D(B,\s)$ be the discriminant algebra.
If $R$ splits, namely, $R \cong \CS \times \CS$, then $B \cong A \times A^\text{op}$ and $\s$ is the exchange involution.
Then $\un{\textbf{U}}(B,\s) \cong \un{\textbf{GL}}_1(A)$ and $\un{\textbf{SU}}(B,\s) \cong \un{\textbf{SL}}_1(A)$,
so we are back in the previous situation.

\begin{cor}
If $\un{\textbf{U}}(B,\s)(\CS) \xrightarrow{\Nrd} R^\times$ is not surjective then the Hasse-principle does not hold for $\un{\textbf{SU}}(B,\s)$. 
\end{cor}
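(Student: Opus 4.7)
The plan is to mirror the two-sided strategy of the preceding corollary, working with the long exact sequence displayed immediately above the statement.

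I would first dispose of the easier backward direction. Since $\un{\textbf{SU}}(B,\s)$ is simply-connected of type $\text{A}$ and $K$ is a function field, Harder's theorem gives $H^1(K,\textbf{SU}(B,\s))=1$, so sequence \eqref{Nis simple} collapses to the identification $H^1_\fl(\CS,\un{\textbf{SU}}(B,\s)) = \ClS(\un{\textbf{SU}}(B,\s))$. Consequently the Hasse principle for $\un{\textbf{SU}}(B,\s)$ is equivalent to the vanishing of $H^1_\fl(\CS,\un{\textbf{SU}}(B,\s))$. Assuming this vanishing, exactness of the displayed sequence immediately forces $R^\times / \Nrd(\un{\textbf{U}}(B,\s)(\CS)) = 1$, which is precisely surjectivity of $\Nrd$ on $\CS$-points.

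For the forward direction, assume $\Nrd \colon \un{\textbf{U}}(B,\s)(\CS) \to R^\times$ is surjective. Then the induced sequence of $\CS$-points
$$ 1 \to \un{\textbf{SU}}(B,\s)(\CS) \to \un{\textbf{U}}(B,\s)(\CS) \xrightarrow{\Nrd} R^\times \to 1 $$
is short exact. Following the argument used for $\un{\textbf{SL}}_1(A)$, the fact that $B$ is a quaternion $R$-algebra, whose generators have exponent $2$ over $R$, should supply a splitting of this sequence. A section $R^\times \to \un{\textbf{U}}(B,\s)(\CS)$ then yields enough $\CS$-rational structure in $\un{\textbf{U}}(B,\s)$ to exhibit a non-trivial split torus in the subgroup $\un{\textbf{SU}}(B,\s)$, forcing it to be locally isotropic at some place of $S$. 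Hence the local factor $\prod_{s \in S} \textbf{SU}(B,\s)(\hat{K}_s)$ is non-compact, and Prasad's strong approximation theorem \cite[Theorem~A]{Pra} produces $\ClS(\un{\textbf{SU}}(B,\s)) = 1$, establishing the Hasse principle.

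The main obstacle, inherited from the $\un{\textbf{SL}}_1(A)$ case, is to make precise the implication \emph{splitting of the sequence of $\CS$-points $\Rightarrow$ local isotropy at some place in $S$}. One must carefully track how a chosen section produces a split $\CS$-subtorus inside $\un{\textbf{SU}}(B,\s)$, exploiting the quaternionic structure of $B$ over $R$ and the compatibility of the unitary involution $\s$ with the chosen generators. Once this structural step is in place, the remaining cohomological ingredients (Harder's theorem, Prasad's strong approximation, and Nisnevich's sequence \eqref{Nis simple}) combine automatically to yield the Hasse principle.
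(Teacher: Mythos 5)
The paper states this corollary without proof, leaving it as an analogue of the $\un{\textbf{SL}}_1(A)$ case that precedes it, so your task is to carry the analogy through rigorously. Your backward direction is correct and is the intended argument: Harder's theorem forces $H^1(K,\textbf{SU}(B,\s))=1$, sequence \eqref{Nis simple} collapses $H^1_\fl(\CS,\un{\textbf{SU}}(B,\s))$ to the principal genus $\ClS(\un{\textbf{SU}}(B,\s))$, and exactness of the displayed sequence then turns vanishing of that set into triviality of $R^\times/\Nrd(\un{\textbf{U}}(B,\s)(\CS))$.

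The forward direction, however, has a genuine gap that your appeal to the $\un{\textbf{SL}}_1(A)$ template does not close, and the problem is more concrete than you acknowledge. A section $\phi\colon R^\times\to\un{\textbf{U}}(B,\s)(\CS)$ with $\Nrd\circ\phi=\id$ has image meeting $\ker(\Nrd)=\un{\textbf{SU}}(B,\s)(\CS)$ trivially, so it cannot possibly exhibit a torus \emph{inside} $\un{\textbf{SU}}(B,\s)$; at best you get a semidirect product decomposition $\un{\textbf{U}}(B,\s)(\CS)\cong\un{\textbf{SU}}(B,\s)(\CS)\rtimes R^\times$, from which isotropy of $\un{\textbf{SU}}$ does not follow formally. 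Even setting that aside, the torus you would obtain is of the form $\text{Res}_{R/\CS}(\un{\BG}_m)$, not a split $\CS$-torus, so the phrase ``split torus'' is wrong (this is repairable, since what Lemma \ref{H1=1 sc} needs is only non-compactness of $\prod_{s\in S}\textbf{SU}(B,\s)(\hat K_s)$, and a norm-one torus in a quadratic extension of a local field still has positive split rank). Finally, the sentence ``the exponent of the generators of $B$ over $R$ is $2$ should supply a splitting'' is copied from the paper's $\un{\textbf{SL}}_1(A)$ proof without the structural adaptation that the unitary involution $\s$ demands; in the unitary setting the generators live over $R$, the map $\Nrd$ is $\s$-twisted, and one must check separately that the paper's splitting criterion applies. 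Until the implication ``$\Nrd$ surjective on $\CS$-points $\Rightarrow$ $\textbf{SU}(B,\s)$ is isotropic at every $s\in S$'' is established, the forward direction is not proved.
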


\mk

\section{In the Zariski topology} \label{Zariski}
A $\un{G}$-torsor $P$ is \emph{Zariski}, 
if the twisted form ${^P}\un{G}$ is generically and locally everywhere away of $S$ isomorphic to $\un{G}$, 
i.e., if it belongs to the principal genus of $\un{G}$ (see Section \ref{torsors classification}).
Let $\un{G}_0$ be a quasi-split semisimple $\CS$-group with an almost-simple generic fiber.
The continuous morphism between the categories of open subsets of $\CS$:
$(\CS)_\et \to (\CS)_\zar$ results -- given a variety $X$ defined over $\CS$ --
in the opposite inclusion of cohomology sets $H^r_\zar(\CS,X) \subseteq H^r_\et(\CS,X)$ for all $r>0$.
The restriction of the decomposition \eqref{Gille0}
\begin{equation}
\textbf{Twist}(\un{G_0}) \cong H^1_\et(\CS,\mathrm{Aut}(\un{G}_0)) \cong
\coprod_{[P]} H^1_\et(\CS,{^P}(\un{G}_0^\ad))/ \Theta(\CS)
\end{equation}
to Zariski torsors gives (compare with \cite[p.181]{Har}):
\begin{align} \label{Zariski twisted forms}
\textbf{Twist}_\zar(\un{G}_0) \cong H^1_\zar(\CS,\mathrm{Aut}(\un{G}_0)) \cong H^1_\zar(\CS,\un{G}^\ad_0)/\Theta(\CS).   
\end{align}
But as aforementioned, $H^1_\zar(\CS,\un{G}^\ad_0)$ is equal to the principal genus of $\un{G}_0^\ad$
on which the action of $\Theta(\CS)$ is trivial, hence \eqref{Zariski twisted forms} refines to:
\begin{equation} \label{Zariski twisted forms no action}
\textbf{Twist}_\zar(\un{G}_0) \cong H^1_\zar(\CS,\un{G}_0^\ad).
\end{equation}
Moreover, restricting the bijection $H^1_\et(\CS,\un{G}_0^\ad) \cong H^2_\et(\CS,F(\un{G}_0^\ad))$ (Lemma \ref{H1G iso to H^2F})
to the Zariski topology, $H^1_\zar(\CS,\un{G}_0^\ad)$ can be replaced with $H^2_\zar(\CS,F(\un{G}_0^\ad))$.
All twisted forms of $\un{G}_0$ in the Zariski topology being $K$-isomorphic are isotropic, so this time this includes groups of type~$\text{A}$.
Suppose $F(\un{G}_0^\ad)$ is admissible, splitting over an \'etale extension $R$ of $\CS$.
Then $H^1_\zar(\CS,\un{\BG}_m) \cong \Pic(\CS)$ while as $R$ is locally factorial $H^2_\zar(R,\un{\BG}_m)$ is trivial (\cite[Remark 3.5.1]{CTS})
thus $i(F(\un{G}_0))$ as well (see Definition \ref{i}).
Hence similarly as was done for $H^2_\et(\CS,F(\un{G}_0^\ad))$ in Lemma \ref{decomposition of H2 F admissible},
we get that $H^2_\zar(\CS,F(\un{G}_0^\ad)) \cong j(F(\un{G}_0))$.

\begin{cor}
Let $\un{G}_0$ be a semisimple $\CS$-group with an almost-simple generic fiber and an admissible fundamental group.
Then: $\textbf{Twist}_\zar(\un{G}_0) \cong j(F(\un{G}_0))$.
\end{cor}

\mk

{\bf Acknowledgements:}
The authors thank P.~Gille, B.~Kunyavski\u\i\ and A.~Qu\'eguiner-Mathieu for valuable discussions concerning the topics of the present article.
They would like also to thank the anonymous referee for a careful reading and many constructive remarks.

\mk

\end{document}